\theoremstyle{plain}
\newtheorem{theorem}{Theorem}
\newtheorem{lemma}[theorem]{Lemma}
\newtheorem{proposition}[theorem]{Proposition}
\theoremstyle{definition}
\theoremstyle{remark}
\newtheorem{remark}[theorem]{Remark}
\newcommand*{\R}{\mathbb{R}}
\newcommand*{\eps}{\varepsilon}
\newcommand*{\doo}{\partial}
\newcommand{\parens}[1]{\left( #1 \right)}
\newcommand{\joukko}[1]{\left\{ #1 \right\}}
\newcommand{\abs}[1]{\left\lvert #1 \right\rvert}
\newcommand{\norm}[1]{\left\| #1 \right\|}
\newcommand{\der}{\mathrm{d}}
\title{Inverse problems for a model of biofilm growth}
\author[1,2,3]{Tommi Brander}
\author[4]{Daniel Lesnic}
\author[5]{Kai Cao}
\affil[1]{University of South-Eastern Norway, Department for Mathematics and Science Education, Norway}
\affil[2]{Norwegian University of Science and Technology, Department for Mathematical Sciences, Norway}
\affil[3]{Technical University of Denmark, Department of Applied Mathematics and Computer Science, Denmark}
\affil[4]{University of Leeds, Department of Applied Mathematics, UK}
\affil[5]{Southeast University, Department of Mathematics, Nanjing, China}
\begin{document}

\maketitle

\begin{abstract}
A bacterial biofilm is an aggregate of micro-organisms growing fixed onto a solid surface, rather than floating freely in a liquid. Biofilms play a major role in various practical situations such as surgical infections and water treatment. We consider a non-linear PDE model of biofilm growth subject to initial and Dirichlet boundary conditions, and the inverse coefficient problem of recovering the unknown parameters in the model from extra measurements of quantities related to the biofilm and substrate. By addressing and analysing this inverse problem we provide reliable and robust reconstructions of the 
primary physical quantities of interest represented by the diffusion coefficients of substrate and biofilm, the biomass spreading parameters, the maximum specific consumption and growth rates, the biofilm decay rate and the half saturation constant. 
We give
particular attention to the constant coefficients involved in the leading-part non-linearity, and present a uniqueness proof and some numerical results. In the course of the numerical investigation, we have identified extra data 
information that enables improving the reconstruction of the eight-parameter set of physical quantities associated to the model of biofilm growth. 
\end{abstract}

%






\paragraph{Email:} tommi.brander@usn.no, kcao@seu.edu.cn, d.lesnic@leeds.ac.uk

\paragraph{Keywords:} Biofilm; inverse problem; uniqueness; parameter estimation; reaction-diffusion system; degenerate parabolic system

\paragraph{MSC:} 
35R30, 
35Q92, 
65M32, 
35K40, 
35K59, 
35K65, 
35K67, 
65M06, 
65Z05, 
35B65 


\section{Introduction}
Communities of microbial cells create biofilms that are encountered in various processes related to plant growth promotion and protection, sewage bio-remediation, chronic infections and industrial bio-fouling \cite{Wallace:Li:Davidson:2016}. One of the typical consequences of biofilm formation is that resident microbes become significantly resistant to physical stresses and 
anti-microbial agents. It is therefore very important to model the biofilm growth for characterisation, modelling and control. In this spirit, in this paper 
we consider the inverse problem of recovering the constant parameters in a reaction-diffusion model of biofilm growth.
Biofilms are created when bacteria form a highly resilient matrix, rather than float freely.
For more on biofilms, see reviews such as \cite{Mazza:2016,Wilson:Lukowicz:Merchant:ValquierFlynn:Caballero:Sandoval:Okuom:Huber:Brooks:Wilson:Clement:Wentworth:Holmes:2017}.

Biofilm growth can be modelled by a system of coupled partial differential equations with initial and boundary conditions, 
see system~\eqref{eq:biofilm} below, originally due to Eberl, Parker and Van Loosdrecht~\cite{Eberl:Parker:VanLoosdrecht:2001} and later investigated by Efendiev, Zelik and Eberl~\cite{Efendiev:Zelik:Eberl:2009}.
The equations have parameters that relate to the growth rate, the death rate, and the spreading of the biofilm, as well as the behaviour of nutrients the biofilm feeds on.
We identify these parameters from information that could be provided by measurements of physical quantities related to the biofilm and the substrate. For numerical reconstruction we use a nonlinear least-squares solver, where we minimize the discrepancy between the observed density flux of nutrients and the computed solution produced by the parameters we are optimizing for.

The main challenges are the nonlinear nature of the mathematical problem and recovering the parameters that are involved in the nonlinearity.

Consider a biofilm whose density $0 \le M(x,t) < 1$ is a measurable function, and a substrate whose density $0 \le S(x,t) \le 1$ is also measurable for $(x,t) \in \Omega \times \mathbb{R}_{+}$, where $\Omega \subset \mathbb{R}^{n}$ is a Lipschitz bounded domain with piecewise smooth boundary $\partial \Omega$.  
Here, substrate density means the density of nutrients; in the literature on biofilms, substrate can also mean the material the biofilm is growing on and attached to, but we do not use the word in this sense.
The following pair of nonlinear parabolic Lotka-Volterra-type equations provides a continuum model for the growth of biofilms~\cite[section~5.1]{Efendiev:2013}:
\begin{eqnarray} \label{eq:biofilm}
\begin{cases}
\doo_t S = d_1 \Delta_x S - K_1 \frac{SM}{K_4 + S} + F(x,t), \quad (x,t) \in \Omega \times \R_{+}, \\
\doo_t M = d_2 \nabla_x \cdot \parens{\frac{M^b}{\parens{1-M}^a} \nabla_x M} - K_2 M + K_3 \frac{SM}{K_4+S} + G(x,t), \quad (x,t) \in \Omega \times \R_{+}, \\
S|_{\doo \Omega \times \R_+} = 1, \quad M|_{\doo \Omega \times \R_+} = 0, \\
S|_{t=0} = S_0, \quad M|_{t=0} = M_0, \end{cases} 
\end{eqnarray}
with biological constants $d_1 > 0$, $d_2 > 0$, $K_1 \geq 0$, $K_2 \geq 0$, $K_3 \geq 0$, $K_4 > 0$, $a \geq 0$ and $b \geq 1$, where $F$ and $G$ are given source functions.
For simplicity, we assume $K_1 > 0$ and $K_3 > 0$; otherwise, at least one of the two partial differential equations decouples.
Injectivity proofs and numerical reconstructions can also be established in the decoupled case with the same techniques of this paper. The physical meaning of the quantities present in the mathematical model \eqref{eq:biofilm} are as follows:
\begin{itemize}
\item $d_1$: substrate diffusion coefficient
\item $d_2$: biofilm diffusion coefficient
\item $K_1$: maximum specific consumption rate
\item $K_2$: biofilm decay rate
\item $K_3$: maximum specific growth rate
\item $K_4$: Monod's half saturation constant~\cite[page 383]{Monod:1949}
\item $a, b$: biomass spreading parameters
\item $S_{0}$ and $M_{0}$ are the initial densities at time $t=0$ of the substrate and biofilm, respectively, satisfying the compatibility conditions $S_0|_{\doo \Omega}=1$ and $M_0|_{\doo \Omega}=0$ with the Dirichlet boundary data. 
Zero Neumann insulated boundary condition on the density $M$ may also be prescribed instead of the zero Dirichlet boundary condition.
\end{itemize}
If $a=b=0$, then the system~\eqref{eq:biofilm} yields a well-investigated semilinear two-species predator-prey Lotka-Volterra model, but with the assumptions $a>0$ and $b \ge 1$, the nonlinear diffusivity
$\lambda(M) = \frac{M^b}{\parens{1-M}^a}$ makes the quasilinear biofilm model-problem challenging.

Although biofilms are heterogeneous~\cite{Costerton:Stewart:Greenberg:1999,Mazza:2016}, the model we use treats them as homogeneous masses.
We deem this necessary to keep the model sufficiently simple.
The inverse problem approach allows fitting the model to actual biofilms and thereby checking its validity. 
Other possible developments include more complicated models with several species of biofilms or antibiotics.
Alternatively, one might wish to consider more elaborate models of nutrient flow, rather than the simple diffusion we have used here.

Returning to the model at hand, we remark that if all the eight biological constants contained in the vector $\underline X = \parens{d_{1}, d_{2}, K_1, K_2, K_3, K_4, a, b}$ are known, then the direct problem has a unique solution $(S(x,t),M(x,t))$.
This is also true for mixed Dirichlet-Neumann boundary conditions, but pure Neumann boundary conditions may cause problems~\cite[section~5.1]{Efendiev:2013}.

Now, suppose we can observe the densities of the biofilm~$M$ and the substrate~$S$ over some suitable set of space-time points,  and that the coefficients in the governing PDEs are unknown.
Can all or some of the eight coefficients above be uniquely recovered from such observations?

First, we note a trivial obstruction:
\begin{remark}
In the homogeneous case $F=G=0$ in \eqref{eq:biofilm}, if $S_{0} \equiv 1$ and $M_{0} \equiv 0$, then it follows that $S \equiv 1$ and $M \equiv 0$ form a trivial solution of the system~\eqref{eq:biofilm} and none of the coefficients can be recovered.
Even if only $M \equiv 0$, the most interesting parameters $a$, $b$ and $d_2$ are unrecoverable, as there is no biofilm to observe.
We assume tacitly throughout that this is not the case.
\end{remark}

Next, we consider special situations which lead to uniqueness. These are similar in spirit to the critical couples used by Lorz, Pietschmann and Schlottbom~\cite{Lorz:Pietschmann:Schlottbom:2019}. When investigating uniqueness we assume no noise in the input data. On the other hand, noisy data are necessary to be considered when investigating the stability of the solution.

\begin{theorem}
\label{thm:unique}
Assume that $S$ and $M$ are known everywhere and that there exist special points with properties given by the assumptions below.
Then, all the 8 coefficients $d_1$, $d_2$, $K_1$, $K_2$, $K_3$, $K_4$, $a$ and $b$ can be uniquely determined.
The assumptions are:
\begin{enumerate}[label=(\roman*)]
\item There exists a point $(x_0,t_0) \notin \doo \joukko{(x, t) \in \Omega \times \R_+ ; M(x, t) = 0}$ where $\Delta_x S(x_0, t_0) \neq 0$ and either $S(x_0,t_0) = 0$ or $S(x_0,t_0) > 0$ and 
$M(x_0,t_0) = 0$. \label{ass:iii}
\item There exist points $(x_j,t_j)$, $j \in \joukko{1,2}$, such that the vectors
\begin{equation}
\parens{\doo_{t} S(x_j, t_j) - d_{1} \Delta_{x} S(x_j, t_j) - F(x_{j},t_{j}), S(x_j, t_j)M(x_j, t_j)}, \quad j=1,2, \nonumber
\end{equation}
are linearly independent.
\label{ass:iv}
\item There exist times $t_j$, $j \in \joukko{3,4}$, such that the vectors
	\begin{equation}
\label{eq:iii_li}
\parens{\int_{\Omega}M(x,t_j) \der x , \int_{\Omega} \frac{S(x, t_j) M(x, t_j)}{K_4+S(x, t_j)} \der x }, \quad j=3,4,
	\end{equation}
are linearly independent. \label{ass:v}
\item There exist points $(x_j,t_j)$, $j \in \joukko{5,6,7}$,  
with $M(x_j,t_j)$ taking different values, and for all $j \in \joukko{5, 6, 7}$ it holds that $\Delta_x M(x_j,t_j) \neq 0$, $\nabla_x M(x_j,t_j) = 0$ and $0 < M(x_j,t_j) < 1$. \label{ass:vi}
\end{enumerate}
\end{theorem}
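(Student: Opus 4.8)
The plan is to recover the eight parameters sequentially, in the order $d_1$; then $(K_1,K_4)$; then $(K_2,K_3)$; and finally $(d_2,a,b)$, each of the four hypotheses being tailored to isolate one block. The common device is to evaluate either the governing equation, or its spatial integral over $\Omega$, at the special points provided, where the nonlinear terms either vanish or reduce to an expression that is linear (possibly after taking a logarithm) in the still-unknown parameters. Since $S$ and $M$ are known everywhere, every quantity not containing an unknown parameter — including the derivatives $\doo_t S$, $\Delta_x S$, $\doo_t M$, $\Delta_x M$, $\nabla_x M$ — is treated as data; the only regularity caveat is that these derivatives must exist and the PDEs must hold pointwise at the chosen points, which is exactly why the boundary of the set $\joukko{M=0}$ is excluded in assumption~\ref{ass:iii}.

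First I would recover $d_1$. At the point $(x_0,t_0)$ of assumption~\ref{ass:iii} either $S=0$ or $M=0$, so the Monod term $K_1 SM/(K_4+S)$ vanishes and the first equation of~\eqref{eq:biofilm} collapses to $\doo_t S = d_1\Delta_x S + F$; as $\Delta_x S(x_0,t_0)\neq 0$, this gives $d_1 = (\doo_t S - F)/\Delta_x S$ there. With $d_1$ in hand, set $A_j := \doo_t S(x_j,t_j) - d_1\Delta_x S(x_j,t_j) - F(x_j,t_j)$ for $j=1,2$; the first PDE reads $A_j(K_4+S_j) = -K_1 S_j M_j$, i.e. the two linear equations $A_j K_4 + (S_j M_j)K_1 = -A_j S_j$ in the unknowns $(K_4,K_1)$. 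Their coefficient rows are precisely the vectors of assumption~\ref{ass:iv}, whose linear independence makes the system uniquely solvable.

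Next I would integrate the second equation of~\eqref{eq:biofilm} over $\Omega$. By the divergence theorem the diffusion term turns into the boundary flux of $d_2\frac{M^b}{(1-M)^a}\nabla_x M\cdot\nu$ over $\doo\Omega$, which vanishes because $M|_{\doo\Omega}=0$ together with $b\geq 1$ (or because $\nabla_x M\cdot\nu=0$ under the Neumann alternative). This leaves the scalar identity $\frac{\der}{\der t}\int_\Omega M = -K_2\int_\Omega M + K_3\int_\Omega \frac{SM}{K_4+S} + \int_\Omega G$, in which $K_4$ is already known. Evaluating at the times $t_3,t_4$ yields two linear equations in $(K_2,K_3)$ whose coefficient rows are exactly the vectors~\eqref{eq:iii_li}; their independence determines $K_2$ and $K_3$.

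The crux is the last block. At each point of assumption~\ref{ass:vi} we have $\nabla_x M=0$, so upon expanding the divergence the cross term drops and the diffusion reduces to $d_2\frac{M^b}{(1-M)^a}\Delta_x M$. Since $K_2,K_3,K_4$ are now known, the reaction terms are data, and writing $m_j := M(x_j,t_j)\in(0,1)$, $D_j := \Delta_x M(x_j,t_j)\neq 0$ and $C_j := \doo_t M + K_2 M - K_3\frac{SM}{K_4+S} - G$ evaluated at $(x_j,t_j)$, the second PDE becomes $C_j = d_2\frac{m_j^b}{(1-m_j)^a}D_j$. The right-hand side is positive, so taking logarithms linearises everything: $\log(C_j/D_j) = \log d_2 + b\log m_j - a\log(1-m_j)$ for $j=5,6,7$, a $3\times 3$ system for $(\log d_2, b, a)$ with rows $(1,\log m_j,-\log(1-m_j))$. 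The main obstacle is to prove this system invertible using only that $m_5,m_6,m_7$ are distinct; I would argue that $\joukko{1,\log m,-\log(1-m)}$ is a Chebyshev system on $(0,1)$. Indeed, if the determinant vanished then some nontrivial $g(m)=\alpha+\beta\log m-\gamma\log(1-m)$ would have three distinct zeros, so by Rolle's theorem $g'$ would have at least two zeros; but $g'(m)=\frac{\beta+(\gamma-\beta)m}{m(1-m)}$ has a numerator linear in $m$ and hence at most one zero in $(0,1)$, a contradiction. Thus distinctness of the $m_j$ alone forces nonsingularity, $d_2,a,b$ are determined, and all eight parameters are recovered; the only point needing care elsewhere is the pointwise validity of the equations, which is exactly what the geometric hypotheses on the special points secure.
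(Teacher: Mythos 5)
Your proposal is correct and follows essentially the same route as the paper: sequential recovery of $d_1$ from assumption (i), of $(K_1,K_4)$ from the linear system given by assumption (ii), of $(K_2,K_3)$ by integrating the biofilm equation over $\Omega$ and using the boundary condition as in assumption (iii), and of $(d_2,a,b)$ by log-linearising the diffusivity at the critical points of assumption (iv). Even your invertibility argument for the final $3\times 3$ system (Rolle's theorem applied to $\alpha+\beta\log m-\gamma\log(1-m)$, whose derivative has a numerator linear in $m$) is the same counting-of-critical-points argument the paper uses, just phrased as a Chebyshev-system property.
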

For a proof of the theorem, see section~\ref{sec:unique}.
The derivatives exist due to Lemma~\ref{lemma:regularity} in Appendix \ref{AppendixA}. Below we give some justifications as to why the above assumptions $(i)$-$(iv)$ are sensible and practically realistic.\\
$\bullet$ A typical situation for biofilm is that it starts from a configuration where it covers only part of the domain~$\Omega$, that is, the support of $M_0$ is compactly contained in $\Omega$.
In such a case, the numerical results~\cite[section~5.1, numerical example (d)]{Efendiev:2013} suggest that the biofilm density will continue to have compact support for an extended period of time.
Further, many other equations of porous medium type do enjoy finite speed of propagation~\cite[section~1.2.1]{Vazquez:2007}.

Outside of the support of $M$, the substrate density satisfies a diffusion equation,
and there is no reason to assume this density to be a stationary harmonic function.\\
$\bullet$ There is no reason for the substrate density to be constant.\\
$\bullet$ There is no reason to assume that the biofilm mass and the biofilm feeding rate remain linearly coupled in a nonlinear model.\\
$\bullet$ If the biofilm density has a strict local maximum that changes value over time, or if the biofilm density has several strict local maxima at any time, condition $(iv)$ is satisfied. Both of these are reasonable scenarios to assume.

Finally, we remark that Theorem \ref{thm:unique} provides the unique retrieval of the 8 coefficients forming the vector of unknown parameters $\underline{X} \in \mathbb{R}^{8}$, but this recovery does not guarantee that the coefficients belong to the physically admissible set
\begin{eqnarray}
\underline{X} =  \parens{d_1, d_2, K_1, K_2, K_3, K_4, a, b} \in \frak{X} := (0,\infty)^{3} \times [0, \infty) \times (0, \infty)^2 \times [0,\infty) \times [1, \infty) \subset \R^{8}. \nonumber
\end{eqnarray}
Further discussions on this point are made in section~\ref{sec:unique}.


\subsection{Imaging and measuring biofilms}
\label{sec:imaging}
Both optical coherence tomography~\cite{Xi:Marks:Schlachter:Luo:Boppart:2006,Wagner:Horn:2017} and confocal scanning laser microscopy (CSLM)~\cite{Lawrence:Korber:Hoyle:Costerton:Caldwell:1991,Schlafer:Meyer:2017} \cite[section~2.2.5]{Surman:Walker:Goddard:Morton:Keevil:Weaver:Skinner:Hanson:Caldwell:Kurtz:1996} have been used to effectively observe the fine structure of biofilms in a non-destructive way.
Optical measurements have also been used to measure biofilm thickness~\cite{Bakke:Kommedal:Kalvenes:2001}, even continuously in time~\cite{Milfersted:Pons:Morgenroth:2006}.
Different techniques are reviewed in the articles~\cite{Azeredo:Azevedo:Briandet:Cerca:Coenye:Costa:Desvaux:DiBonaventura:Hebraud:Jaglic:Kacaniova:Knoechel:Lourenco:Mergulhao:Meyer:Nychas:Simoes:Tresse:Sternberg:2017,Janknecht:Melo:2003,Wolf:Crespo:Reis:2002,Surman:Walker:Goddard:Morton:Keevil:Weaver:Skinner:Hanson:Caldwell:Kurtz:1996}.

The imaging of biofilm metabolism has been reviewed in~\cite{Kuehl:Polerecky:2008}. Since biofilms grow fairly slowly~\cite[section~3]{Xi:Marks:Schlachter:Luo:Boppart:2006}, in the time span of hours and days, measurements of their density can be effectively performed continuously in time.
Thus, we feel justified in assuming that the biofilm density $M$ is accessible to measurement, as employed in our model.

Many spectral methods, e.g.  hyperspectral imaging~\cite{Kuehl:Polerecky:2008}, nuclear magnetic resonance imaging (NMR)~\cite{Grivet:Delort:2009} and spectroscopic techniques~\cite{Sankaran:Karampatzakis:Rice:Wohland:2018}, can distinguish concentrations of chemicals as a function of depth of certain chemicals biofilms feed on~\cite{Atci:Babauta:Ha:Beyenal:2017}, which also allows determining the substrate density~$S$.

\subsection{Models of biofilm growth}
There are several different models of biofilm growth; we refer to the reviews~\cite{Mattei:Frunzo:DAcunto:Pechaud:Pirozzi:Esposito:2018,Horn:Lackner:2014,Klapper:Dockery:2010,Wang:Zhang:2010}.
The model we are using makes the following noteworthy simplifications or assumptions:
\begin{itemize}
\item Substrate is governed by a diffusion equation. An equation of fluid flow would be more realistic in many situations.
\item The growth rate of the biofilm is proportional to the density of substrate for small substrate densities. The proportionality might not be true in biofilms, though it holds for free-floating bacteria~\cite{Moeller:Kristensen:Poulsen:Carstensen:Molin:1995}. A sufficiently small half-saturation constant $K_4$ makes the proportionality only true for very small values of $S$.
\item The diffusion rate of the substrate is a function of biofilm density. Our model ignores this for the sake of simplicity. However, the fact that the biofilm consumes the nutrients will effectively slow down the diffusion of nutrients deep into regions of biofilm with high density.
\end{itemize}
The book of Efendiev~\cite{Efendiev:2013} discusses the model \eqref{eq:biofilm} that we use here in detail and presents several generalizations, with numerics studied in~\cite{Ghasemi:Eberl:2018}.

\subsection{Related inverse problems }
Prior research on inverse problems of cell biology and population dynamics concerns, for example, population models~\cite{Duelk:2015}, biochemical reactions~\cite{Duelk:2015}, antibiotics~\cite{Serovajsky:Nurseitov:Kabanikhin:Azimov:Ilin:Islamov:2018}, and chemotaxis~\cite{Egger:Pietschmann:Schlottbom:2017}.

To our knowledge, there are no prior publications on inverse problems for biofilm growth.
There is an inverse problems study on diffusion through biofilms~\cite{Ma:Liu:Jiang:Liu:Tang:Ye:Zeng:Huang:2010}, but it does not discuss the growth rate of biofilms themselves.
Ad hoc estimation of some parameters has also been done in relation to antibiotics and biofilms in, for example,~\cite{Birnir:Carpio:Cebrian:2018}.

For a review of inverse problems for parabolic equations we refer to~\cite[section~9]{Isakov:2017}.
The literature on parameter recovery for quasilinear parabolic equations is extensive, especially when it comes to recovering a dependency on the solution itself; see the references in~\cite{Mierzwiczak:Kolodziej:2011} for many older papers.
For newer results, we mention the study in multiple dimensions by Egger, Pietschmann and Schlottbom~\cite{Egger:Pietschmann:Schlottbom:2015}.
Recovering multiple coefficients has been discussed in~\cite{Pilant:Rundell:1989,Lesnic:2002}.
There are more recent results on the recovery of diffusion coefficient in the presence of lower-order terms~\cite{Egger:Pietschmann:Schlottbom:2017} and on the simultaneous recovery of 
diffusion and lower-order terms~\cite{Egger:Pietschmann:Schlottbom:2014}.
The investigated problems are not degenerate.

Cortazar and Elgueta investigated a degenerate quasilinear problem~\cite{Cortazar:Elgueta:1990}.
Degenerate-singular problems have been studied less; we are aware of the works on the elliptic quasilinear variable exponent $p(\cdot)$-Laplace equation, where a linear factor in the diffusion is recovered~\cite{Brander:Winterrose:2019,Brander:Ringholm} and 
where the non-linearity itself is recovered~\cite{Brander:Siltakoski:2021}.
There is prior work on parameter recovery in non-linear systems of semilinear~\cite{Isakov:2001} and quasilinear nature~\cite{Egger:Pietschmann:Schlottbom:2017}.
We also mention inverse problems for non-linear reaction-diffusion equations with zeroth-order non-linearity~\cite{Duelk:2015,Sgura:Lawless:Bozzini:2018} and recovering lower-order terms in degenerate parabolic equations with a non-linearity in the zeroth-order terms~\cite{Tort:Vancostenoble:2012}. 

In this paper, we consider the singular-degenerate quasilinear parabolic system \eqref{eq:biofilm} and investigate the determination of all the parameters, both leading and lower order.
The task is manageable due to the very particular forward model, which means that we are only trying to recover a finite set of constant parameters.
Whenever needed, we also may assume interior knowledge of the system.

\section{Uniqueness results}
\label{sec:unique}
In this section we present several lemmas which indicate that a solution to the inverse problem is unique in an ideal situation with perfect measurements and no noise.
The linear terms are easy to recover, while for the non-linear terms we reduce the problem to the analysis of a one-dimensional real function, in the same spirit as in certain boundary determination 
procedures for $p$-Laplace type equations~\cite{Brander:2016:jan,Brander:Ringholm}.

We note that the required derivatives exist in the classical sense due to the regularity of the solutions, which is documented in Lemma~\ref{lemma:regularity} of Appendix \ref{AppendixA} for $F = G \equiv 0$.

\subsection{Equation for the substrate}
The results in this section follow immediately from the equation for the substrate in \eqref{eq:biofilm}, namely,
\begin{equation}
\label{eq:only_substrate}
\doo_t S = d_1 \Delta_x S - K_1 \frac{SM}{K_4 + S} +F, \quad (x,t) \in \Omega \times \R_{+}.
\end{equation}

\begin{lemma}
\label{lemma:d1}
If assumption~\ref{ass:iii} is satisfied,
\begin{eqnarray}
d_{1} = \frac{\doo_{t} S(x_{0},t_{0}) - F(x_{0},t_{0})}{\Delta_x S(x_0,t_0)}. \label{eq.6}
\end{eqnarray}
\end{lemma}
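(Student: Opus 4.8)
The plan is to evaluate the substrate equation~\eqref{eq:only_substrate} at the special point $(x_0,t_0)$ supplied by Assumption~\ref{ass:iii} and observe that the nonlinear reaction term vanishes there, leaving a purely linear relation in $d_1$ that can be inverted directly.

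First I would record that, since $(x_0,t_0)$ lies off the free boundary $\doo \joukko{(x,t) \in \Omega \times \R_+ ; M(x,t) = 0}$, the regularity asserted in Lemma~\ref{lemma:regularity} guarantees that $\doo_t S(x_0,t_0)$ and $\Delta_x S(x_0,t_0)$ exist in the classical sense; hence \eqref{eq:only_substrate} holds as a genuine pointwise identity at $(x_0,t_0)$, not merely in a weak or almost-everywhere sense. This is the only place where the ``not on the free boundary'' part of the hypothesis does any work, and it is precisely what protects us from the degeneracy of the biofilm diffusion at $M=0$.

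The crux is to check that the Monod term $K_1 \frac{SM}{K_4+S}$ is zero at $(x_0,t_0)$. Because $K_4 > 0$ and $S \geq 0$, the denominator $K_4 + S$ is strictly positive, so the term vanishes exactly when the product $S(x_0,t_0)\,M(x_0,t_0) = 0$. Assumption~\ref{ass:iii} delivers this in either of its two clauses: if $S(x_0,t_0)=0$ the product is zero through its first factor, and if instead $S(x_0,t_0)>0$ with $M(x_0,t_0)=0$ it is zero through its second factor. In both cases the nonlinearity is annihilated at the point.

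Once the reaction term is gone, \eqref{eq:only_substrate} collapses at $(x_0,t_0)$ to $\doo_t S(x_0,t_0) = d_1 \Delta_x S(x_0,t_0) + F(x_0,t_0)$, and dividing through by $\Delta_x S(x_0,t_0) \neq 0$ — which is legitimate precisely because Assumption~\ref{ass:iii} stipulates this nonvanishing — yields the formula~\eqref{eq.6}. There is no substantial obstacle in the argument: the entire content of the hypothesis is engineered to switch off the nonlinear coupling while keeping $\Delta_x S$ away from zero, after which the recovery of $d_1$ is immediate from the linear part of the substrate equation.
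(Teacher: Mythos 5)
Your proof is correct and follows essentially the same route as the paper's: evaluate \eqref{eq:only_substrate} at $(x_0,t_0)$, note that assumption~\ref{ass:iii} forces $S(x_0,t_0)M(x_0,t_0)=0$ so the Monod term drops out, and divide by $\Delta_x S(x_0,t_0)\neq 0$. The paper's proof is just a terser version of this; your additional remarks on the positivity of $K_4+S$ and on the role of Lemma~\ref{lemma:regularity} in justifying the pointwise identity are accurate elaborations, not a different method.
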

The lemma shows that the recovery of $d_1$ is unique. Moreover, $d_{1} \in (0,\infty)$ if the right-hand side of \eqref{eq.6} is a positive number. 
\begin{proof}
When evaluated at $(x_{0},t_{0})$, the last term in equation~\eqref{eq:only_substrate} vanishes under the assumption ~\ref{ass:iii}, and the identity \eqref{eq.6} follows immediately.
\end{proof}

The equation~\eqref{eq:only_substrate} for the substrate can be re-arranged to read
\begin{eqnarray}
S(x,t)M(x,t) K_1 + \parens{\doo_t S(x,t) - d_1 \Delta_x S(x,t)} K_{4} = -\parens{\doo_t S(x,t) - d_1 \Delta_x S(x,t) -F(x,t)} S(x,t). \label{eq.7}
\end{eqnarray}

\begin{lemma}
\label{lemma:linsub}
Suppose $d_1$ is known and assumption~\ref{ass:iv} is satisfied. Then, the recovery of $K_1$ and $K_4$ is unique.
\end{lemma}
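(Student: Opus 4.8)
The plan is to exploit the fact that, although $K_4$ enters the substrate equation~\eqref{eq:only_substrate} nonlinearly through the denominator $K_4 + S$, clearing that denominator turns the equation into a relation that is \emph{linear} in the pair $(K_1, K_4)$. Since $K_4 > 0$ and $S \ge 0$ we have $K_4 + S > 0$, so multiplying \eqref{eq:only_substrate} by $K_4 + S$ introduces no division by zero, and collecting the unknowns on one side produces the linear form recorded in \eqref{eq.7}. Because $d_1$ is already known (say from Lemma~\ref{lemma:d1}), every coefficient in this relation — the factor $S M$ multiplying $K_1$, the residual $\doo_t S - d_1 \Delta_x S - F$ multiplying $K_4$, and the right-hand side — is computable from the observed data. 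Thus \eqref{eq.7} should be read as a single scalar linear equation in the two scalar unknowns $K_1$ and $K_4$.

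A single scalar equation cannot pin down two unknowns, so I would evaluate this relation at the two space-time points $(x_1,t_1)$ and $(x_2,t_2)$ supplied by assumption~\ref{ass:iv}, obtaining the $2 \times 2$ linear system
\begin{equation}
\begin{pmatrix} S_1 M_1 & R_1 \\ S_2 M_2 & R_2 \end{pmatrix} \begin{pmatrix} K_1 \\ K_4 \end{pmatrix} = \begin{pmatrix} -R_1 S_1 \\ -R_2 S_2 \end{pmatrix}, \qquad R_j := \doo_t S(x_j,t_j) - d_1 \Delta_x S(x_j,t_j) - F(x_j,t_j),
\end{equation}
where $S_j := S(x_j,t_j)$ and $M_j := M(x_j,t_j)$. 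The rows of the coefficient matrix are exactly the vectors $(R_j, S_j M_j)$ appearing in assumption~\ref{ass:iv}, merely with their two entries interchanged, and interchanging coordinates (an invertible linear map applied to both vectors) preserves linear independence. Hence assumption~\ref{ass:iv} is precisely the statement that this coefficient matrix is invertible, so the system has a unique solution $(K_1,K_4)$, which one can even write out by Cramer's rule. Any other admissible pair producing the same data would satisfy the same invertible system and therefore coincide with $(K_1,K_4)$, which is the claimed uniqueness.

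The routine parts are the algebraic rearrangement into \eqref{eq.7} and the bookkeeping of the source term $F$, which gets multiplied by $K_4 + S$ when the denominator is cleared and hence appears in the residual $R_j$ exactly as in assumption~\ref{ass:iv}. The one genuinely load-bearing hypothesis is the linear independence in \ref{ass:iv}: without it the two evaluations could be proportional, the determinant $S_1 M_1 R_2 - S_2 M_2 R_1$ would vanish, and a whole one-parameter family of $(K_1,K_4)$ would remain consistent with the data. So the main conceptual step is recognising that the seemingly nonlinear dependence on $K_4$ collapses to linear algebra once the denominator is cleared, and the only thing left to verify — namely that the $2 \times 2$ determinant is nonzero — is exactly what \ref{ass:iv} encodes.
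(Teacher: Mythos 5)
Your proof is correct and takes essentially the same route as the paper: clear the denominator $K_4+S$ to get a relation linear in $(K_1,K_4)$, evaluate it at the two points of assumption~\ref{ass:iv}, and observe that the linear independence there is exactly the invertibility of the resulting $2\times2$ system (the paper additionally writes out the Cramer's-rule formulas \eqref{eq.9}). One small point in your favour: your residual $R_j$ keeps the $-F$ term in the coefficient of $K_4$, which agrees with assumption~\ref{ass:iv} and with the quantities $c_i$ in the paper's own proof, whereas the paper's displayed rearrangement \eqref{eq.7} omits $-F$ from that coefficient --- a typo your bookkeeping silently corrects.
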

\begin{proof}
Denote
\[
S_{i} = S(x_i,t_i), \quad M_i = M(x_i,t_i),
\quad c_i = \doo_t S(x_i,t_i) - d_1 \Delta_x S(x_i,t_i) -F(x_{i},t_{i}),
\quad i \in \{1,2\}.
\]
Then, on applying equation~\eqref{eq.7} at the points $(x_i,t_i)$ we obtain
\begin{eqnarray}
K_4 c_i + K_1 S_iM_i = - S_i c_i, \quad i = 1,2. \label{eq.8}
\end{eqnarray}
Equations \eqref{eq.8} form a linear system of two equations with two unknowns $K_{1}$ and $K_{4}$, which has a unique solution if and only if the determinant $(c_{1}S_{2}M_{2} - c_{2}S_{1}M_{1})$ of the 
system is non-zero, which is equivalent to the assumption~\ref{ass:iv} being satisfied. Then, the solution of \eqref{eq.8} is given by 
\begin{eqnarray}
K_{1} = \frac{(S_{1}-S_{2}) c_{1} c_{2}}{c_{1}S_{2}M_{2} - c_{2}S_{1}M_{1}}, \quad K_{4} = \frac{S_{1}S_{2}(c_{2} M_{1} - c_{1} M_{2})}{c_{1}S_{2}M_{2} - c_{2}S_{1}M_{1}}. \label{eq.9}
\end{eqnarray}
Moreover, $K_{1}$ and $K_{4} \in (0,\infty)$ if the right hand sides of the expressions in \eqref{eq.9} are positive numbers.
\end{proof}


\subsection{Linear factors in the equation for biofilm}

We proceed by integrating over the domain and using both the divergence theorem and boundary conditions on the biofilm density $M$.
To that end, we recall that the previous results imply in particular that $K_4$ can be recovered uniquely
if we assume the existence of certain special points.

\begin{proposition}
\label{prop:integrate}
Let $K_{4}$ be known. Suppose $S$ and $M$ are known in the entire spatial domain~$\Omega$ at the times $t_3 \neq t_4$, where assumption~\ref{ass:v} is satisfied. 
Then, the recovery of $K_2$ and $K_3$ is unique.
\end{proposition}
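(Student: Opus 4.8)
The plan is to integrate the biofilm equation in \eqref{eq:biofilm} over the spatial domain~$\Omega$ and to exploit the boundary conditions on $M$ to annihilate the leading-order nonlinear diffusion term, thereby isolating the two linear coefficients $K_2$ and $K_3$. Write $m(t) = \int_{\Omega} M(x,t)\,\der x$, $q(t) = \int_{\Omega} \frac{S(x,t)M(x,t)}{K_4+S(x,t)}\,\der x$ and $g(t) = \int_{\Omega} G(x,t)\,\der x$. Integrating the second PDE over $\Omega$, the divergence theorem turns the diffusion term into the boundary flux $d_2\int_{\doo\Omega} \frac{M^b}{(1-M)^a}\,\nabla_x M\cdot\nu\,\der\sigma$. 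On $\doo\Omega$ we have $M=0$, and since $b\ge 1$ this forces $M^b = 0$ there, while $(1-M)^a$ is harmless near the boundary (it is singular only where $M\to 1$); hence the boundary term vanishes. The same conclusion holds under the alternative zero-Neumann condition $\nabla_x M\cdot\nu = 0$. This is precisely the step that removes all dependence on the unknown leading-part parameters $d_2$, $a$ and $b$.

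Interchanging the time derivative with the spatial integral, which is justified by the classical regularity recorded in Lemma~\ref{lemma:regularity}, the integrated equation reduces to the scalar identity
\begin{equation}
m'(t) = -K_2\, m(t) + K_3\, q(t) + g(t). \nonumber
\end{equation}
All of $m(t)$, $m'(t)$, $q(t)$ and $g(t)$ are known quantities: $m$ and $q$ are computed from the known profiles of $S$ and $M$ together with the already-recovered value of $K_4$, and $g$ comes from the given source~$G$.

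Evaluating this identity at the two times $t_3$ and $t_4$ from assumption~\ref{ass:v} produces the $2\times 2$ linear system
\begin{equation}
\begin{pmatrix} -m(t_3) & q(t_3) \\ -m(t_4) & q(t_4)\end{pmatrix}\begin{pmatrix}K_2 \\ K_3\end{pmatrix} = \begin{pmatrix} m'(t_3) - g(t_3) \\ m'(t_4) - g(t_4)\end{pmatrix}. \nonumber
\end{equation}
Its determinant equals, up to sign, $m(t_3)q(t_4) - m(t_4)q(t_3)$, which is exactly the determinant of the matrix formed by the two vectors in \eqref{eq:iii_li}. Assumption~\ref{ass:v} states that these vectors are linearly independent, so the determinant is nonzero and the system has a unique solution $(K_2, K_3)$, establishing the claim.

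The only genuinely delicate points are the two flagged above: the vanishing of the nonlinear flux at the boundary and the differentiation under the integral sign. The first is the crux of the argument, as it is what decouples the lower-order coefficients from the diffusion parameters that cannot be recovered at this stage; it relies essentially on $b\ge 1$, so that $M^b$ vanishes together with $M$, and on $(1-M)^a$ remaining bounded where $M=0$. The second is purely a regularity matter and is supplied by Lemma~\ref{lemma:regularity}. Everything else is elementary linear algebra.
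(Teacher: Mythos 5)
Your proposal is correct and follows essentially the same route as the paper's own proof: integrate the biofilm equation over $\Omega$, kill the nonlinear flux term via the divergence theorem and the zero-Dirichlet condition on $M$, and then solve the resulting $2\times 2$ linear system at $t_3$ and $t_4$ using the linear independence in assumption~\ref{ass:v}. Your added justifications (the role of $b\ge 1$ in the vanishing boundary term and the differentiation under the integral sign) only make explicit what the paper leaves implicit.
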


\begin{proof}
By integrating the equation for the biofilm density in~\eqref{eq:biofilm} over space and using the divergence theorem, the highly non-linear term becomes
\begin{equation}
\int_{\doo \Omega} \frac{M^b}{\parens{1-M}^{a}} \nabla_x M \cdot \nu \; \der S(x), \nonumber
\end{equation}
where $\nu$ denotes the outward unit normal to the boundary $\partial \Omega$. This term vanishes due to the zero-Dirichlet boundary condition on $M$.
We are left with the equation
\begin{eqnarray}
\int_\Omega ( \doo_{t} M -G) \der x = - K_2 \int_\Omega M \der x + K_3 \int_\Omega \frac{SM}{K_4+S} \der x. \label{extraa}
\end{eqnarray}
This is a linear equation with two unknowns, and hence the linear independence~\eqref{eq:iii_li} in assumption~\ref{ass:v} implies the uniqueness in finding the 
coefficients $K_2$ and $K_3$. Moreover, $K_{2} \in [0,\infty)$ and $K_{3} \in (0,\infty)$ if the corresponding expressions giving $K_{2}$ and $K_{3}$ out of the system of equations obtained by applying \eqref{extraa} at $t=t_{3}$ and $t=t_{4}$, e.g. by Cramer's rule, possess these sign properties.
\end{proof}


\subsection{Critical points when substrate and some constants K are known}
\label{subsec:nonlinear_injectivity}



In this section, we assume that $S$ and the constants $K_2,K_3$ and $K_4$ are known, as well as the biofilm density~$M$. The employed methodology is similar in principle to the one in \cite{Lorz:Pietschmann:Schlottbom:2019}, and it is based on evaluating the governing equation for the biofilm's density at a finite set of discrete points in order to obtain a linear system that can be solved to uniquely determine the sought parameters.

Consider a point $(x_0,t_0)$ as in assumption \ref{ass:vi} such that $0 < M(x_0,t_0) < 1$, $\Delta_x M(x_0,t_0) \neq 0$ and $\nabla_x M(x_0,t_0) = 0$.
(According to the regularity theory $\nabla_x M$ makes sense pointwise in cylinders where $M$ is bounded away from zero and one.)
At this stage, we have
\begin{equation}
\doo_t M(x_0,t_0) -G(x_{0},t_{0}) =d_{2} \frac{M^b(x_0,t_0)}{\parens{1-M(x_0,t_0)}^a} \Delta_x M(x_0,t_0) - K_2 M(x_0,t_0) + K_3 \frac{S(x_0,t_0)M(x_0,t_0)}{K_4+S(x_0,t_0)}. \nonumber
\end{equation}
We rewrite this equation as
\begin{eqnarray}
N := \parens{\doo_t M(x_0,t_0)  -G(x_{0},t_{0}) + K_2 M(x_0,t_0) - K_3 \frac{S(x_0,t_0)M(x_0,t_0)}{K_4+S(x_0,t_0)}}\parens{\Delta_x M(x_0,t_0)}^{-1} \nonumber \\
=d_2 \frac{M^b(x_0,t_0)}{\parens{1-M(x_0,t_0)}^a}. \nonumber
\end{eqnarray}
Since~$N$ is known, we also know the mapping $M \mapsto d_2 \lambda(M) = d_2 \frac{M^b}{\parens{1-M}^a}$ at critical points with non-zero values of $M$ where $\Delta M$ does not vanish.
Hence, we also know
\begin{equation}
M \mapsto \log \parens{d_2 \lambda(M)} = \log (d_2) + b \log \parens{M} - a \log \parens{1-M} \label{equationn8}
\end{equation}
at the critical points.

Suppose there are three critical points with non-vanishing Laplacian, with the values of $M$ denoted by $M_1$, $M_2$ and $M_3$, which are all distinct from each other, as in assumption~\ref{ass:vi}.
Then, from \eqref{equationn8} we have three equations
\begin{eqnarray}
-\log \parens{1-M_i} a + \log \parens{M_i} b + \log \parens{d_2} = N_i, \quad i =1, 2, 3, \label{extra}
\end{eqnarray}
and we would like to show that the three vectors
\begin{equation}\label{eq:threevectors}
\parens{-\log \parens{1-M_i}, \log \parens{M_i}, 1}, \quad i =1, 2, 3,
\end{equation}
are linearly independent, which would allow solving uniquely for $a$, $b$ and $d_2$.
This is equivalent to saying that, 
if there exist real numbers $\zeta_1$, $\zeta_2$ and $\zeta_3$ such that
\begin{equation}
\label{eq:independence}
- \zeta_1 \log (1-M_i) + \zeta_2 \log (M_i) = -\zeta_3 \quad \mbox{for} \; i \in \{ 1,2,3 \},
\end{equation}
then $\zeta_1 = \zeta_2 = \zeta_3 = 0$. Let therefore~\eqref{eq:independence} be satisfied and
assume first that at least one of
the coefficients $\zeta_1$ and $\zeta_2$ is different from zero.
We consider the mapping
\[
M \mapsto \mathrm{F}(M) := -\zeta_1 \log(1-M) + \zeta_2 \log(M).
\]
Then
\[
\mathrm{F}'(M) = \frac{\zeta_1}{1-M} + \frac{\zeta_2}{M} = \frac{M (\zeta_1 - \zeta_2) + \zeta_2}{M(1-M)}.
\]
Since $\zeta_{1}$ and $\zeta_{2}$ are not simultaneously equal to zero, there exists at most one value $0 < M < 1$ such that $\mathrm{F}'(M) = 0$.
That is, the (continuous) function $\mathrm{F} \colon (0,1) \to \R$ has at most one critical point.
This in turn implies that the equation
$\mathrm{F}(M) = -\zeta_3$ has at most two different solutions,
which contradicts the assumption that we have three distinct
values $M_i$, $i \in \{1,2,3\}$, satisfying~\eqref{eq:independence}.
As a consequence, it follows that both $\zeta_1$ and $\zeta_2$ are necessarily
both equal to zero, which immediately implies that $\zeta_3 = 0$ as well.
Thus the three vectors in~\eqref{eq:threevectors} are linearly independent and therefore $a$, $b$, and $d_2$ are uniquely determined. Moreover, $a \in [0,\infty)$, 
$b \in [1,\infty)$ and $d_{2} \in (0,\infty)$ if the corresponding expressions giving them explicitly by solving the linear system of equations \eqref{extra} using, e.g.\ the Cramer's rule, possess these lower bound properties.

This proves theorem~\ref{thm:unique}, when combined with proposition~\ref{prop:integrate} and lemmas~\ref{lemma:d1} and \ref{lemma:linsub}.

\subsection{Example}
\label{subsec:ex11}
In this subsection, we give an example to show that the assumtions (i)-(iv) of theorem \ref{thm:unique} can be satisfied. We also point out the unique recovery of the vector of unknowns $\underline{X}$ as an outcome of equations \eqref{eq.6}, \eqref{eq.9}, \eqref{extraa} (applied at $t=t_{3}$ and $t=t_{4}$) and \eqref{extra}.

We take $\Omega = (0,1)$, $a=0$, $b=1$, $d_{1} = d_{2} = 1$, $K_{1} = K_{3} = K_{4} =1$, $K_{2}=0$, 
$$M^{\textrm{exact}}(x,t) = 4x(1-x) t e^{1-t}, \quad S^{\textrm{exact}}(x,t) = 1 - M^{\textrm{exact}}(x,t),$$
which satisfy the system \eqref{eq:biofilm} with $M_{0}=0$, $S_{0}=1$ with the source functions
\begin{eqnarray}
F(x,t) = 2 e^{1-t} \left\{ - 4t +2x(1-x)(t-1) + \frac{\left(1-4x(1-x)t e^{1-t} \right) x(1-x) t}{1- 2x(1-x) t e^{1-t}} \right\}, \nonumber \\
G(x,t) = 2 e^{1-t} \left\{ 2x(1-x)(1-t) + 8 t^{2} e^{1-t} (6x - 6 x^{2} -1) 
- \frac{\left(1-4x(1-x)t e^{1-t} \right) x(1-x) t}{1- 2x(1-x) t e^{1-t}} \right\}. \nonumber
\end{eqnarray}
One can easily check that the assumption (i) of theorem \ref{thm:unique} is satisfied for $x_{0}=1/2$ and $t_{0}=1$, and that equation \eqref{eq.6} of lemma \ref{lemma:d1} yields $d_{1} = 1$, as required. Also, the assumption (ii) of theorem \ref{thm:unique} is satisfied for $x_{1} = x_{2} =1/2$, $t_{1}=1/2$ and $t_{2}=1$, and equation \eqref{eq.9} of lemma \ref{lemma:linsub} yields $K_{1} = K_{4} = 1$, as required. Assumption (iii) of theorem \ref{thm:unique} is satisfied for $t_{3}=1/2$ and $t_{4}=1$, and equation \eqref{extraa} applied at $t=t_{3}$ and $t=t_{4}$ yields $K_{2} =0$ and $K_{3} = 1$, as required. Finally, assumption (iv) of theorem \ref{thm:unique} is satisfied for $x_{5}=x_{6}=x_{7}=1/2$, $t_{5}=1/3$, $t_{6}=1/2$ and $t_{7}=2/3$, and equations \eqref{extra} yield $a=0$, $b=1$ and $d_{2} =1$, as required.

\section{Numerical solution of the direct problem}
Henceforth, for the numerical investigation, we specialize on the one-dimensional case. Similar computations can be performed in higher dimensions. 

Consider the following non-linear parabolic system of equations:
\begin{eqnarray}\label{eq11}
\begin{cases}
\frac{\partial S}{\partial t}=d_1\frac{\partial^2S}{\partial x^2}-K_1\frac{SM}{K_4+S}+F(x,t),\quad (x,t)\in (0,1)\times(0,T),\\
\frac{\partial M}{\partial t}=d_2\frac{\partial}{\partial x}(\lambda(M)\frac{\partial M}{\partial x})-K_2M+K_3\frac{SM}{K_4+M}+G(x,t),\quad (x,t)\in (0,1)\times(0,T),\\
S(0,t)=\mu_1(t),\quad S(1,t)=\mu_2(t),\quad t\in(0,T)\\
M(0,t)=\mu_3(t),\quad M(1,t)=\mu_4(t),\quad t\in(0,T),\\
S(x,0)=S_0(x),\quad M(x,0)=M_0(x),\quad x\in[0,1],
\end{cases}
\end{eqnarray}
where $T>0$ is a final finite time of interest, $S(x,t)$, $M(x,t)$ are two unknown functions, $F(x,t)$ and $G(x,t)$ are known source terms, $(\mu_i(t))_{i=\overline{1,4}}$ are Dirichlet boundary data, $S_{0}(x)$ and $M_0(x)$ represent the initial status of 
the system, $\lambda(M)=\frac{M^b}{(1-M)^a}$, and $a\geq0$, $b\geq1$, $d_1>0$, $d_2>0$, $K_1\geq0$, $K_2\geq0$, $K_3\geq0$ and $K_4>0$ are constants.


The numerical solution for solving the direct problem \eqref{eq11} is described in Appendix \ref{AppendixB}.

\subsection{Example 1}
\label{subsec:ex1}
We take $T=1$, $K_{1} = K_{2} = K_{3} = K_{4} = d_{1} = d_{2} = 1$, $\mu_{1}(t)=\mu_{2}(t)=1$, $\mu_{3}(t)=\mu_{4}(t)=0$, and
\begin{eqnarray}
F(x,t)= x-x^2+2(t+1)+\frac{[1+(x-x^2)(t+1)](x-x^2)e^{-t}}{2+(t+1)(x-x^2)},
\nonumber 
\end{eqnarray}
\begin{eqnarray}
G(x,t)= -\frac{(1-2x)^2(x-x^2)^2e^{-4t}}{[1-(x-x^2)e^{-t}]^2}-\frac{2(x-x^2)(1-5x+5x^2)e^{-3t}}{1-(x-x^2)e^{-t}} \nonumber \\
-\frac{[1+(x-x^2)(t+1)](x-x^2)e^{-t}}{2+(x-x^2)(t+1)}, \label{eqG}
\end{eqnarray}
\begin{eqnarray}
S_{0}(x)= 1+x-x^2,\quad M_0(x)=x-x^2, \label{eqS0}
\end{eqnarray}
and 
\begin{eqnarray}
a=1,\quad b=2.
\label{eq31}
\end{eqnarray}

Then, with this input data, the analytical solution of the direct problem \eqref{eq11} is given by
\begin{eqnarray}
S^{\text{exact}}(x,t)=1+(x-x^2)(t+1),\quad M^{\text{exact}}(x,t)=(x-x^2)e^{-t}.
\label{eq32}
\end{eqnarray}

\noindent
The numerical results obtained by solving the direct problem \eqref{eq11} using the numerical finite-difference method (FDM) described in Appendix \ref{AppendixB} 
are presented in Table 1 and excellent agreement with the analytical solution \eqref{eq32} can be observed. Furthermore, the $l^{\infty}$-errors decrease as the mesh size is refined and the second-order convergence estimate $\textrm{max} \left\{ \|S-S^{\mathrm{exact}}\|_\infty,\|M-M^{\mathrm{exact}}\|_\infty \right\} \leq A \left( (\Delta x)^{2} + (\Delta t)^{2} \right)$ with $A=0.3$ is consistent with that indicated in \cite{Lees:1966} for scalar quasilinear parabolic equations. 

\begin{table}[H]
	\centering
	\caption{The $l^\infty$-errors $\|S-S^{\mathrm{exact}}\|_\infty$ and $\|M-M^{\mathrm{exact}}\|_\infty$ for various mesh sizes when solving the direct problem 
\eqref{eq11}.}
	\begin{tabular}{c|c|c|c}
		\hline
		$\Delta x$ & $\Delta t$ & $\|S-S^{\mathrm{exact}}\|_\infty$ & $\|M-M^{\mathrm{exact}}\|_\infty$ \\
		\hline
		0.1 & 0.1 & $1.74\times10^{-4}$ & $2.30\times10^{-3}$   \\
		\hline
		0.05 & 0.05 & $4.56\times10^{-5}$ & $8.02\times10^{-4}$   \\
		\hline
		0.01 & 0.01 & $1.87\times10^{-6}$ & $4.52\times10^{-5}$   \\
		\hline
	\end{tabular}
\end{table}

\section{Inverse problem}
In the previous Section 2, sufficient additional measurements of the densities of biofilm and substrate have been given that provide uniqueness in recovering  
the biological constants associated with \eqref{eq:biofilm}. However, although these extra measurements enable retrieving a unique solution it would be difficult to investigate their optimality in 
terms of their necessity. In addition, the conditions that are imposed in Theorem \ref{thm:unique} are difficult to satisfy numerically, especially when the measured data is contaminated with noise. Therefore, in this section we explore and investigate the numerical reconstruction of the sought biological constants from other types of physical measurements such as the time history of the 
boundary heat flux and the biomass/bioenergy of the biofilm density. Numerical evidence that is presented and discussed below indicates that it is possible to retrieve uniquely the eight-parameter vector $\underline{X}$ from such data.

\subsection{Determining $a$ and $b$}
\label{subsec:simple_inverse}
Let us consider first the case when the constants $(d_1,d_2,K_1,K_2,K_3,K_4) \in (0,\infty)^3 \times[0,\infty) \times(0,\infty)^{2}$ are known and we wish to determine the pair of 
powers $(a,b)\in[0,\infty)\times[1,\infty)$ expressing the nonlinearity of the diffusivity~$\lambda(M)=M^b/(1-M)^a$. 

Consider the input data as in Example~1, section~\ref{subsec:ex1}, but now $a$ and $b$ are unknown and have to be retrieved. In doing so, we measure the flux of $S$ at $x=0$ given by
\begin{eqnarray}
\label{eq:flux_measurement}
-d_{1} \partial_{x}S(0,t)=:q_0(t)=-t-1,\quad t\in[0,1], 
\label{eq33}
\end{eqnarray}
and minimize the functional $H:[0,\infty)\times[1,\infty)\rightarrow\mathbb{R}_+$ given by
\begin{eqnarray}
H(a,b):=\|q_0^c(t;a,b)-q_0(t)\|_{L^2(0,1)}^2,
\label{eq34}
\end{eqnarray}
where $q_0^c$ is the computed flux $-d_1\partial_xS(0,t)$ for given values of $a$ and $b$, and $q_0(t)=-t-1$ given in \eqref{eq33} represents the exact data corresponding to the exact values \eqref{eq31} of $a$ and $b$. Because the functional \eqref{eq34} depends on two variables only, the simplest way to minimize it is to plot it for many uniformly distributed values of $(a,b)$ within some sufficiently wide searching interval (assumed available information from the physics of the problem), say $[0,4] \times [1,4]$. 

The results for $H(a,b)$, $H(a,2)$ and $H(1,b)$ are displayed in Figures 1 and 2. Moreover, the minimal values and minimizers of these functionals are given in Table 2. From these figures and table it can be seen that if the FDM mesh size is sufficiently fine (for our 
example $\Delta x = \Delta t =0.01$) then, the global minimum of the functional $H(a,b)$ (and more clearly of $H(a,2)$ and $H(1,b)$) is attained at the true values \eqref{eq31}. From now on, we fix the mesh size $\Delta x = \Delta t =0.01$ in our remaining computations. 

\begin{table}[H]
	\centering
\caption{The minimum values and minimizers for various mesh sizes.}
	\begin{tabular}{|c|c|c|c|}
		\hline
$\Delta x = \Delta t$& 0.1 & 0.05 & 0.01 \\
		\hline
		$\min H(a,2)$ & 1.4E-6 & 1.6E-7 & 1.2E-9  \\
		\hline
		$a_{\min}$ & 1.75 & 1.5 & 1   \\
		\hline
		$\min H(1,b)$ & 1.4E-6 & 1.6E-7 & 1.2E-9   \\
		\hline
		$b_{\min}$ & 2 & 2&2\\
		\hline
	\end{tabular}
\end{table}

\begin{figure}[H]
	\centering
	\includegraphics[width=100mm]{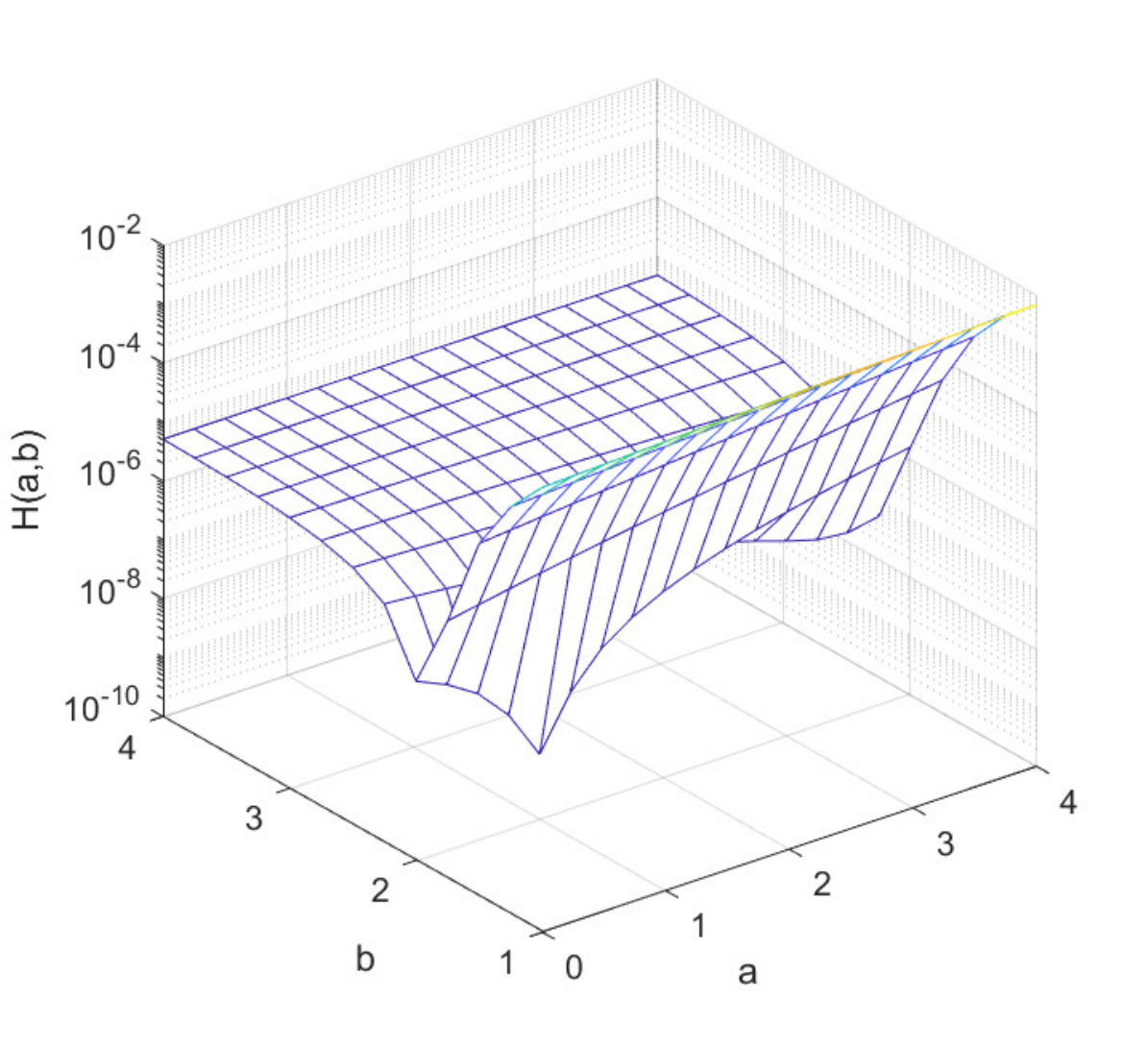}
	\caption{The cost functional $H(a,b)$ for $a\in[0,4]$, $b\in[1,4]$, for $\Delta x=\Delta t =0.01$.}\label{fig21}
\end{figure}

\begin{figure}[H]
	\centering
	\includegraphics[width=80mm]{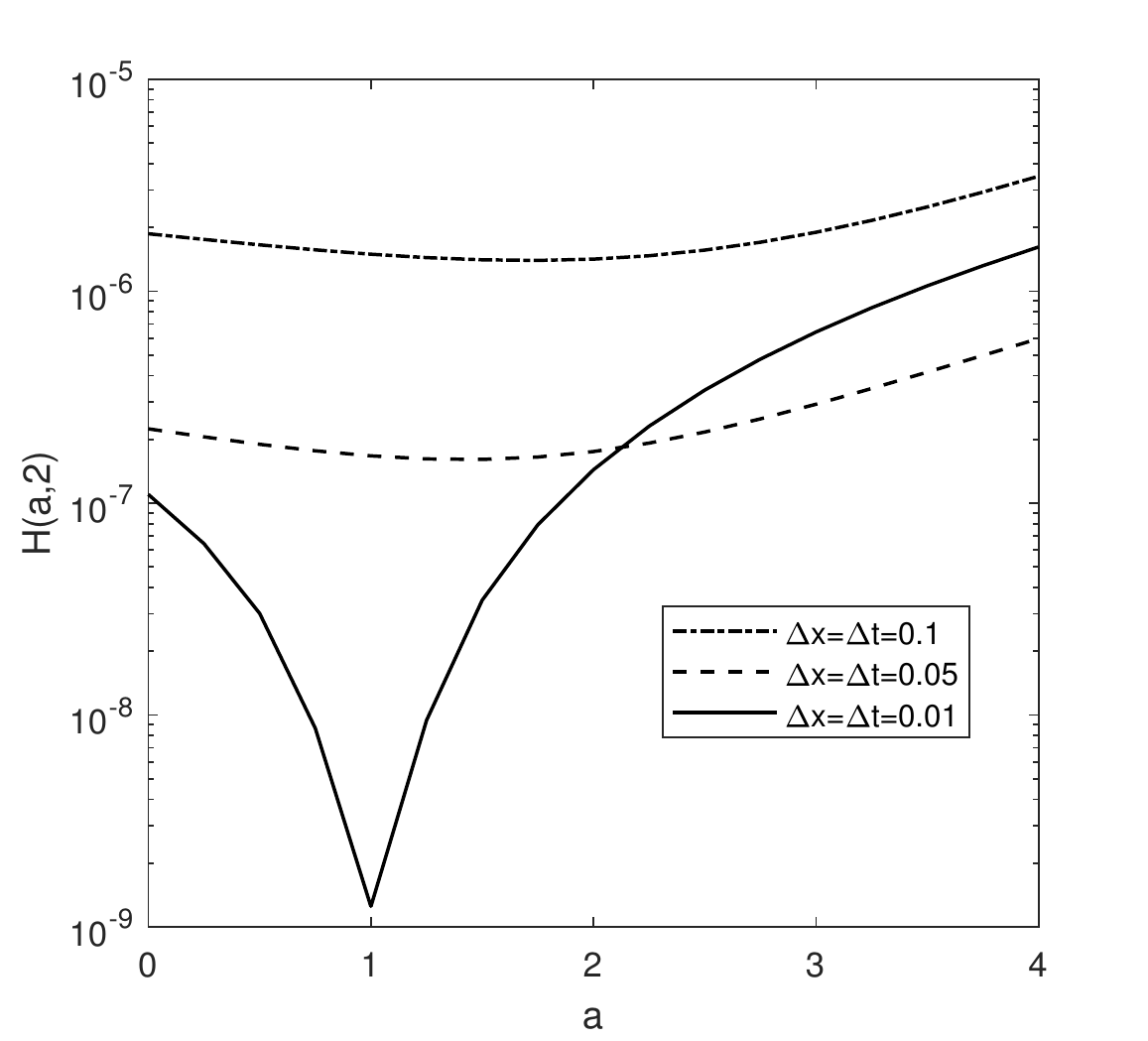}
		\includegraphics[width=80mm]{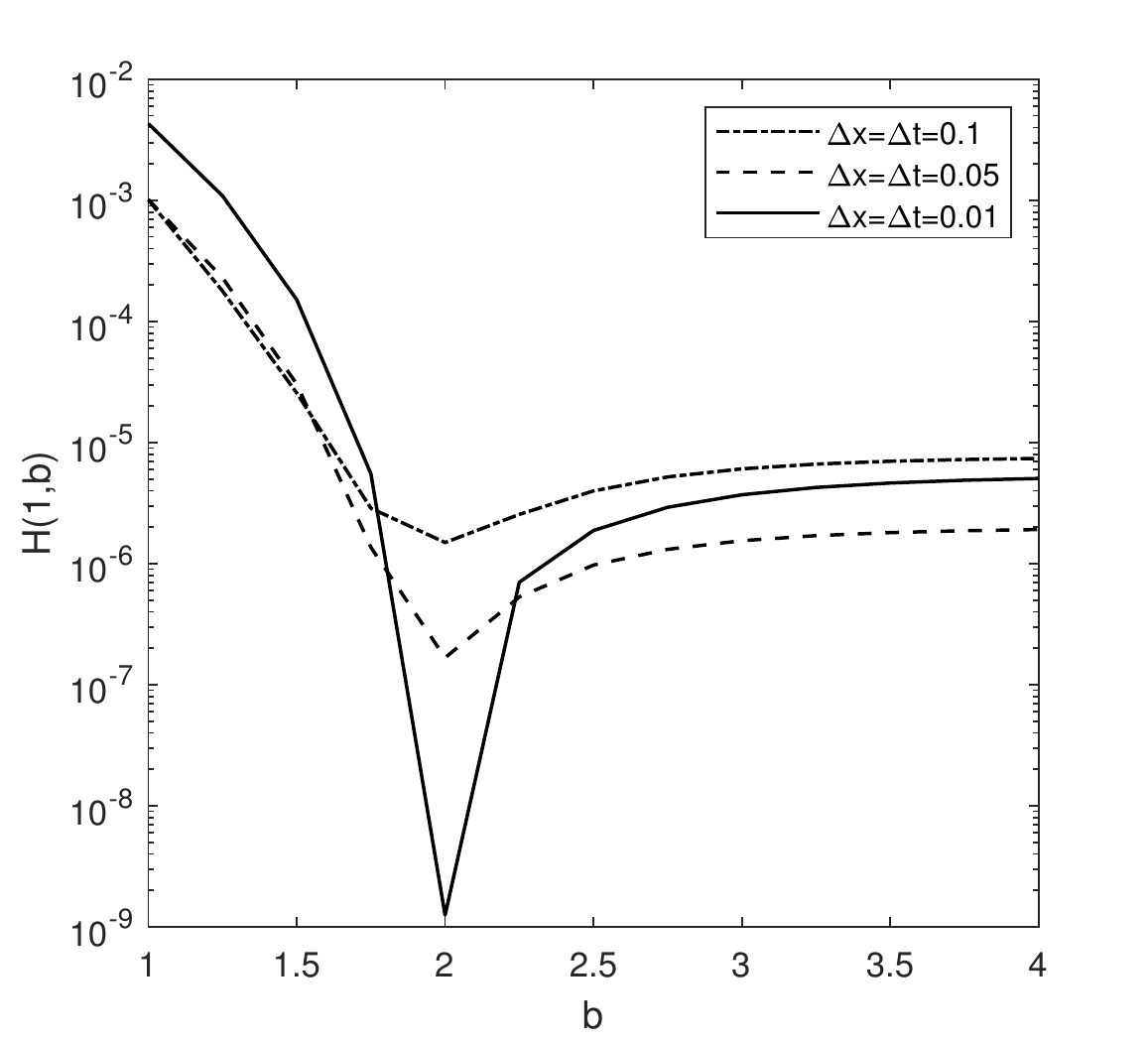}
	\caption{The cost functionals $H(a,2)$ and $H(1,b)$ for various mesh sizes.}\label{fig22}
\end{figure}

Of course, the above plotting technique becomes computationally inefficient and impractical if more than two parameters are to be estimated. In such a situation, gradient iterative minimization methods are desirable. We employ such an iterative method (\textit{lsqnonlin}) from the Matlab toolbox routines, with the simple bounds on the variables
$$ 0 \leq a \leq 4, \quad 1 \leq b \leq 4,$$
starting from various initial guesses: 
$$(a) \; a^{0}=0, \; b^{0}=1; \quad (b) \; a^{0}=2, \; b^{0}=1; \quad (c) \; a^{0}=3, \; b^{0}=1.$$ 
The obtained results are summarised in Table 3. These shows that a fast convergence towards the exact values \eqref{eq31} is achieved for either initial guesses (a)-(c). This independence on the initial guess indicates robustness of 
the iterative minimization.

\begin{table}[H]
	\centering
\caption{The minimal values and minimizers of $H(a,b)$ obtained using the \textit{lsqnonlin} routine for various initial guesses.}
	\begin{tabular}{|c|c|c|c|}
		\hline
      & \textrm{Guess (a)} & \textrm{Guess (b)} & \textrm{Guess (c)} \\
		\hline
		$\min H(a,b)$ &1.43E-11&2.00E-11& 1.19E-11   \\
		\hline
			$(a_{min}, b_{\min})$ &(0.9536, 1.9961)&(1.0552, 2.0046)&(1.0425, 2.0035)\\
			\hline
			$\textrm{No.\ of iterations}$ &15&8&10 \\
		\hline
	\end{tabular}
\end{table}

The analysis performed in this section introduced two different methods of reconstructing the power nonlinearities $a$ and $b$, both of them producing the same recovery of the desired unknowns. In the next section, we investigate whether it is possible to recover more than the two constants $a$ and $b$ from the flux measurement \eqref{eq33}.

\subsection{Determining $d_2$, $(K_{i})_{i=\overline{1,4}}$, $a$,  and $b$}
\label{subsec:intermediate_inverse}
Since $d_1$ appears explicitly in \eqref{eq34}, we assume that $d_{1} = 1$ is known, and try to recover $d_2$ and the four constants $(K_{i})_{i=\overline{1,4}} = \mathbf{1}$, in addition to the power nonlinearities $a=1$ and $b=2$.
As in section~\ref{subsec:simple_inverse}, we minimize the extended objective functional 
$H \colon  (0,\infty)^{2} \times [0, \infty) \times (0,\infty)^{2} \times [0, \infty) \times [1, \infty) \to \R_+$
given by
\begin{equation}
\label{eq:object_function}
H(d_2, (K_{i})_{i=\overline{1,4}}, a, b) = \norm{q_0^c \parens{t; d_2, (K_{i})_{i=\overline{1,4}}, a, b} - q_0(t) }^2_{L^2(0, 1)}
\end{equation}
subject to the simple bounds on the variables
\begin{eqnarray}
0 \leq a \leq 10^{10}, \quad 1 \leq b \leq 10^{10},  \quad 10^{-10} \le d_2 \le 10^{10}, \nonumber \\
10^{-10} \leq K_{1} \leq 10^{10}, \quad 0 \leq K_{2} \leq 10^{10}, \quad 10^{-10} \leq K_{3} \leq 10^{10}, \quad 10^{-10} \leq K_{4} \leq 10^{10}.
\label{eq:simple_bounds}
\end{eqnarray} 
This is accomplished using the iterative \textit{lsqnonlin} from the initial guess
\begin{eqnarray}
a^{0}=2, \quad b^{0}=1, \quad K_{i}^{0}= d_2^0 = 0.5 \; \textrm{for} \; i=\overline{1,4}. 
\label{eq:initial_guess}
\end{eqnarray}

\begin{figure}[H]
	\centering
	\includegraphics[width=100mm]{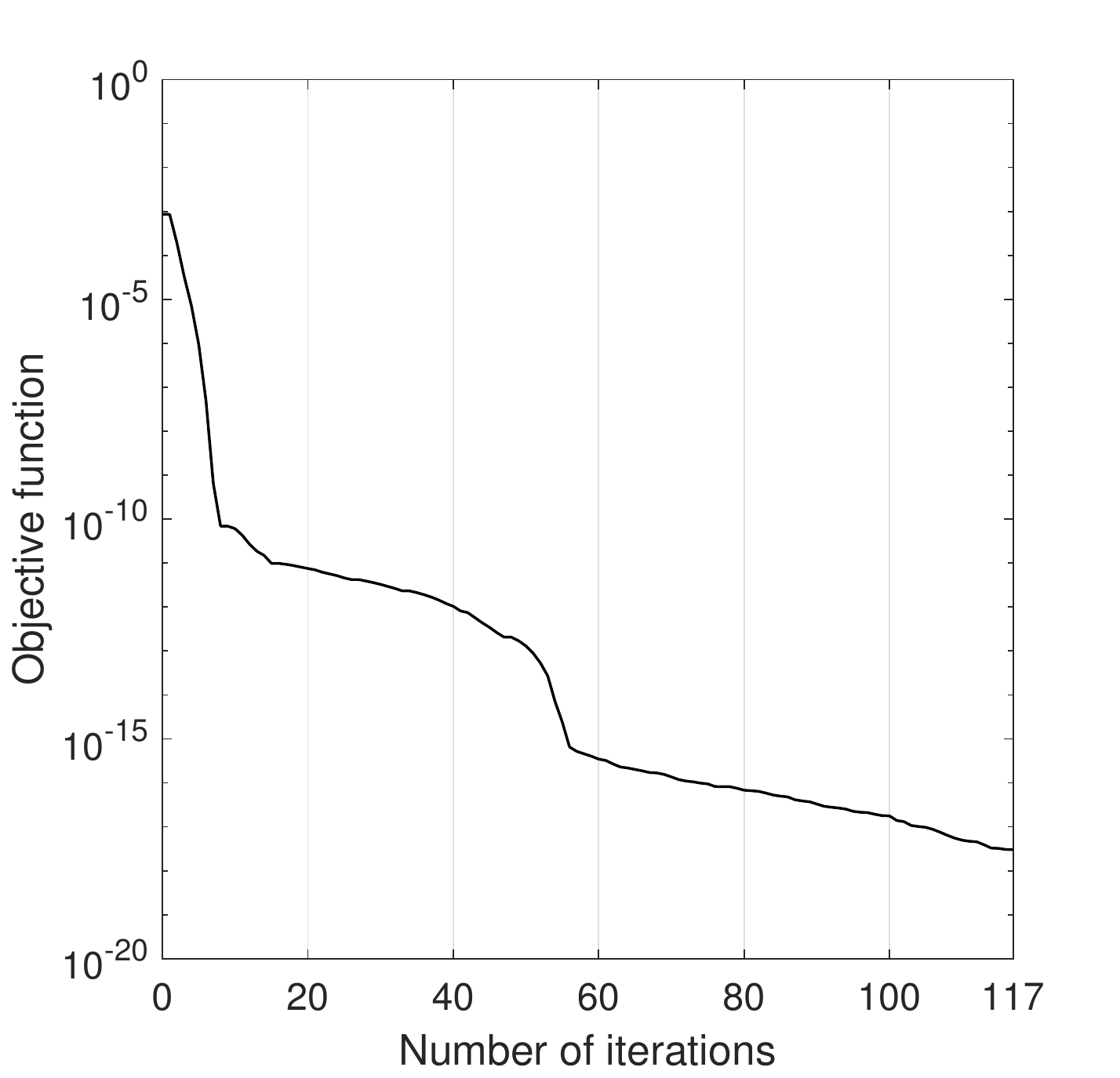}
	\caption{Convergence of the objective function~\eqref{eq:object_function}.}
	\label{fig:jeobj}
\end{figure}

The convergence of the objective function~\eqref{eq:object_function} with the number of iterations is illustrated in Figure~\ref{fig:jeobj}. The reconstructed values after 117 iterations of the \textit{lsqnonlin} routine are:
\begin{eqnarray}
a= 0.9999, \quad b = 2.0000, \quad d_2 =1.0001, \nonumber \\
K_{1}= 1.0003, \quad K_{2}= 0.9612, \quad K_{3}= 0.9320, \quad K_{4}= 1.0000. \nonumber
\end{eqnarray}

These numerical values are in good agreement with their true values, except for $K_2$ and $K_3$, where the errors around 5\%--10\% are larger.

\subsection{Determining all the eight unknowns $d_1, d_2, (K_i)_{i=\overline{1, 4}}, a$ and $b$}
In this section, we present the results of numerically retrieving all the eight unknowns $\underline X = \parens{d_1, d_2, (K_i)_{i=\overline{1, 4}}, a, b} \in \frak X$.
For $d_1$ we take the simple bounds $10^{-10} \le d_1 \le 10^{10}$, whilst for the remaining unknowns we take the bounds given in \eqref{eq:simple_bounds}.
We use the initial guess~\eqref{eq:initial_guess} together with two initial guesses for $d_1$, namely $d_1^0 = 0.5$ or $d_1^0 = 1.3$.
The results obtained by  minimizing the functional
$H \colon \frak X \to \R_+$ given by
\begin{equation}
\label{eq:object_function_hard}
H(\underline X) = \norm{q_0^c \parens{t; \underline X} - q_0(t) }^2_{L^2(0, 1)}
\end{equation}
are illustrated in Figure~\ref{fig:jeobj1} and Table~\ref{table:inverse_results}. 

\begin{figure}
	\centering
	\includegraphics[width=100mm]{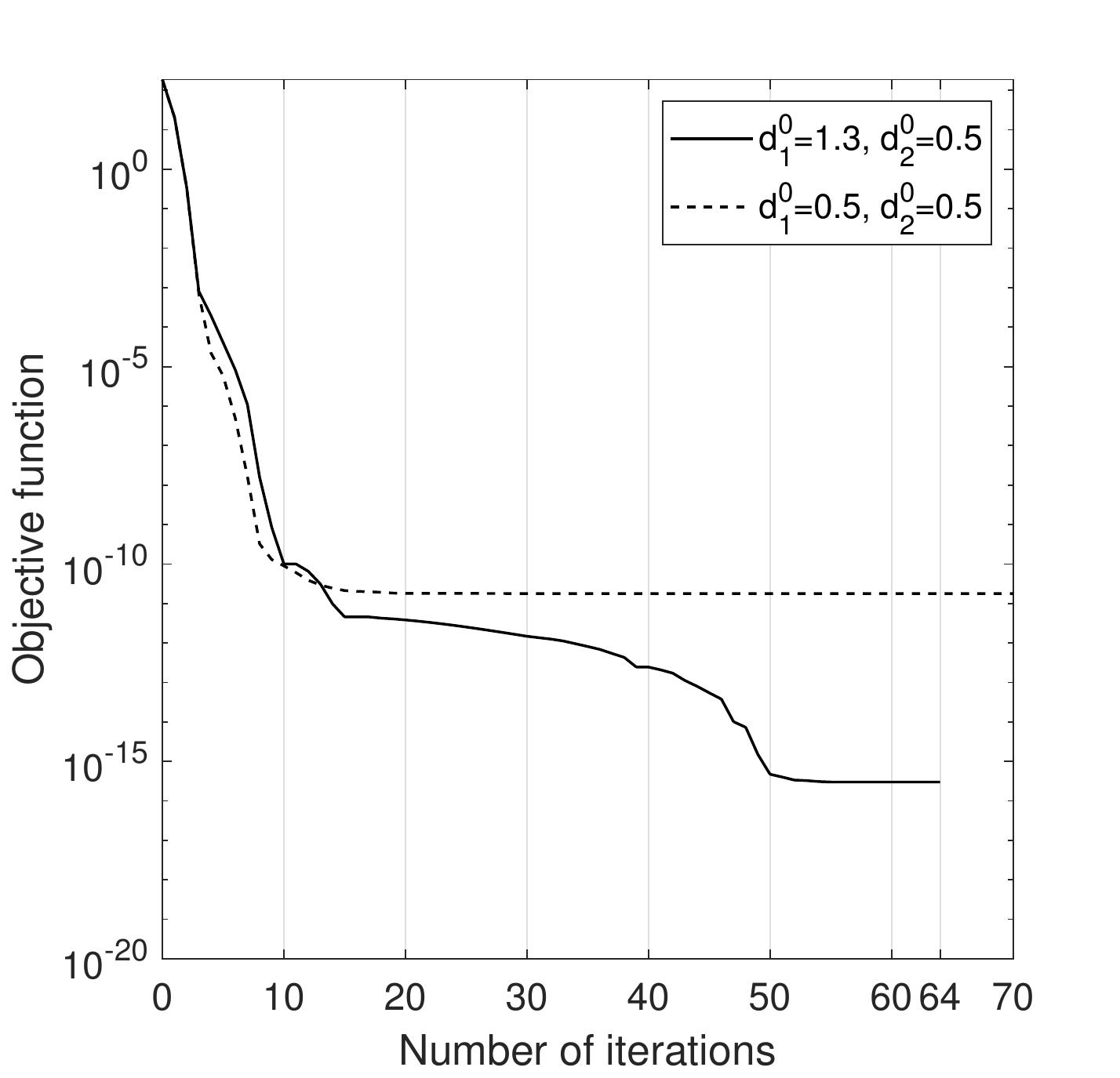}
\caption{Convergence of the objective function~\eqref{eq:object_function_hard} for the initial guess~\eqref{eq:initial_guess} and $d_1^0 \in \joukko{ 0.5 ,1.3}$.}
	\label{fig:jeobj1}
\end{figure}

\begin{table}[H]
	\centering
\caption{The minimizers of \eqref{eq:object_function_hard} obtained using the \textit{lsqnonlin} routine for the initial guess~\eqref{eq:initial_guess} and $d_1^0 \in \joukko{ 0.5 ,1.3}$.}
	\begin{tabular}{|c|c|c|c|}
		\hline
		Parameter 	& $d_1^0 = 0.5$	& $d_1^0 = 1.3$	& Exact	\\
		\hline
		$a$			& 2.1230				& 0.9973				& 1		\\
		\hline
		$b$			& 1.8140				& 2.0007				& 2		\\
		\hline
		$K_1$		& 0.7143				& 1.0032				& 1		\\
		\hline
		$K_2$		& 1.2392				& 0.6079				& 1		\\
		\hline
		$K_3$		& 0.3109				& 0.3127				& 1		\\
		\hline
		$K_4$		& 1.0001				& 1.0000				& 1		\\
		\hline
		$d_1$		& 1.0000				& 1.0000				& 1		\\
		\hline
		$d_2$		& 0.5750				& 1.0018				& 1		\\
		\hline
	\end{tabular}
\label{table:inverse_results}
\end{table}

The initial guess~\eqref{eq:initial_guess} with $d_1^0 = 0.5$ produces a locally convergent solution that got stuck in a local minimum of the functional~\eqref{eq:object_function_hard}.
The initial guess with $d_1^0 = 1.3$ yields a much lower objective function value with estimates closer to their true values, though with errors of similar kind in the biofilm growth and decay rates. In particular, the parameters $K_2$ and $K_3$ seem to be the most difficult to retrieve.
This is somewhat expected, since the constants~$K_2$ and $K_3$ do not appear in the first equation in~\eqref{eq:biofilm} for $S$ and in the inverse problem we minimize the flux $q_0(t)$ of the substrate~$S$.
Furthermore, we have also calculated the sensitivity coefficients (not illustrated) given by the partial derivatives of the measurement~$q_0$ with respect to the unknowns~$K_2$ and $K_3$, obtaining that these are correlated and small, of order $O(10^{-4})$.
This indicates that the parameters $K_2$ and $K_3$ are quite insensitive to the flux measurement~\eqref{eq:flux_measurement}, hence justifying their poorer retrievals illustrated in Table~\ref{table:inverse_results}.

Overall, the numerical investigation performed in this section so far indicates some reasonable retrievals of the coefficients, but it also highlights some difficulties encountered by the employed gradient iterative minimization.
In addition, there might be non-uniqueness issues related to the use of only the flux $q_0(t)$ as the measurement data, which will amplify the errors even further if noise was to be considered in~\eqref{eq33}.
Therefore, we consider extra measurement data given by the biomass/bioenergy of the biofilm density,
\begin{equation}
\label{eq:M_mass}
E_{M}(t) = \int_\Omega M(x, t) \der x = e^{-t}/6, \quad t \in [0, 1],
\end{equation}
calculated here for the analytical solution in section~\ref{subsec:ex1}.
Measurement of total mass (or energy) is customary in the modelling of diffusion processes~\cite{Cannon:Hoek:1986,Cannon:1963} and also very feasible in this case; see section~\ref{sec:imaging}.

The measurements \eqref{eq:flux_measurement} and \eqref{eq:M_mass} are imposed in the least squares sense by minimizing the objective functional $J \colon \frak X \to \R_+$ given by
\begin{equation} \label{eq:objective}
J \parens{\underline X} := \norm{q_0^c \parens{ t; \underline X}-q_0 (t)}^2_{L^2(0, 1)} + \norm{E^c_M (t; \underline X) - E_M (t)}^2_{L^2(0, 1)},
\end{equation}
where $E_M^c(t; \underline X)$ is the computed mass/energy using the trapezoidal rule and the homogeneous Dirichlet boundary conditions on $M$, as
\begin{eqnarray}
E_M^c (t; \underline X) = \Delta x \sum_{i=2}^{I-1} M \parens{ x_i, t; \underline X }. \nonumber
\end{eqnarray}
The convergence of the objective function~\eqref{eq:objective} and the numerically retrieved values for the eight unknowns in vector~$\underline X$ are given in Figure~\ref{fig:5} and Table~\ref{table:5}. Figure~\ref{fig:5} illustrates how the non-convex least-squares functional \eqref{eq:objective} can get stuck or not in a local minimum for certain initial guesses, (see the results for $d_{1}^{0} = 0.5$ compared to $d_{1}^{0} = 1.3$). 
Compared to Table~\ref{table:inverse_results}, the numerical results presented in Table~\ref{table:5} show much better reconstructions of the eight unknowns, 
including the estimates for $K_2$ and $K_3$ (though $K_3$ suffered with $d_1^0 = 0.5$), when the extra measurement~\eqref{eq:M_mass} is taken into account.

\begin{table}[H]
	\centering
\caption{The minimizers of \eqref{eq:objective} obtained using the \textit{lsqnonlin} routine for the initial guess~\eqref{eq:initial_guess} and $d_1^0 \in \joukko{ 0.5 , 1.3}$.}
	\begin{tabular}{|c|c|c|c|}
		\hline
		Parameter 	& $d_1^0 = 0.5$	& $d_1^0 = 1.3$	& Exact	\\
		\hline
		$a$			& 1.0041				& 1.0000				& 1		\\
		\hline
		$b$			& 2.0007				& 2.0000				& 2		\\
		\hline
		$K_1$		& 1.0032				& 1.0000				& 1		\\
		\hline
		$K_2$		& 0.5623				& 1.0005				& 1		\\
		\hline
		$K_3$		& 0.2268				& 1.0010		& 1		\\
		\hline
		$K_4$		& 1.0000					& 1.0000				& 1		\\
		\hline
		$d_1$		& 1.0000			& 1.0000				& 1		\\
		\hline
		$d_2$		& 1.0005				& 1.0000				& 1		\\
		\hline
	\end{tabular}
\label{table:5}
\end{table}

\begin{figure}
	\centering
	\includegraphics[width=100mm]{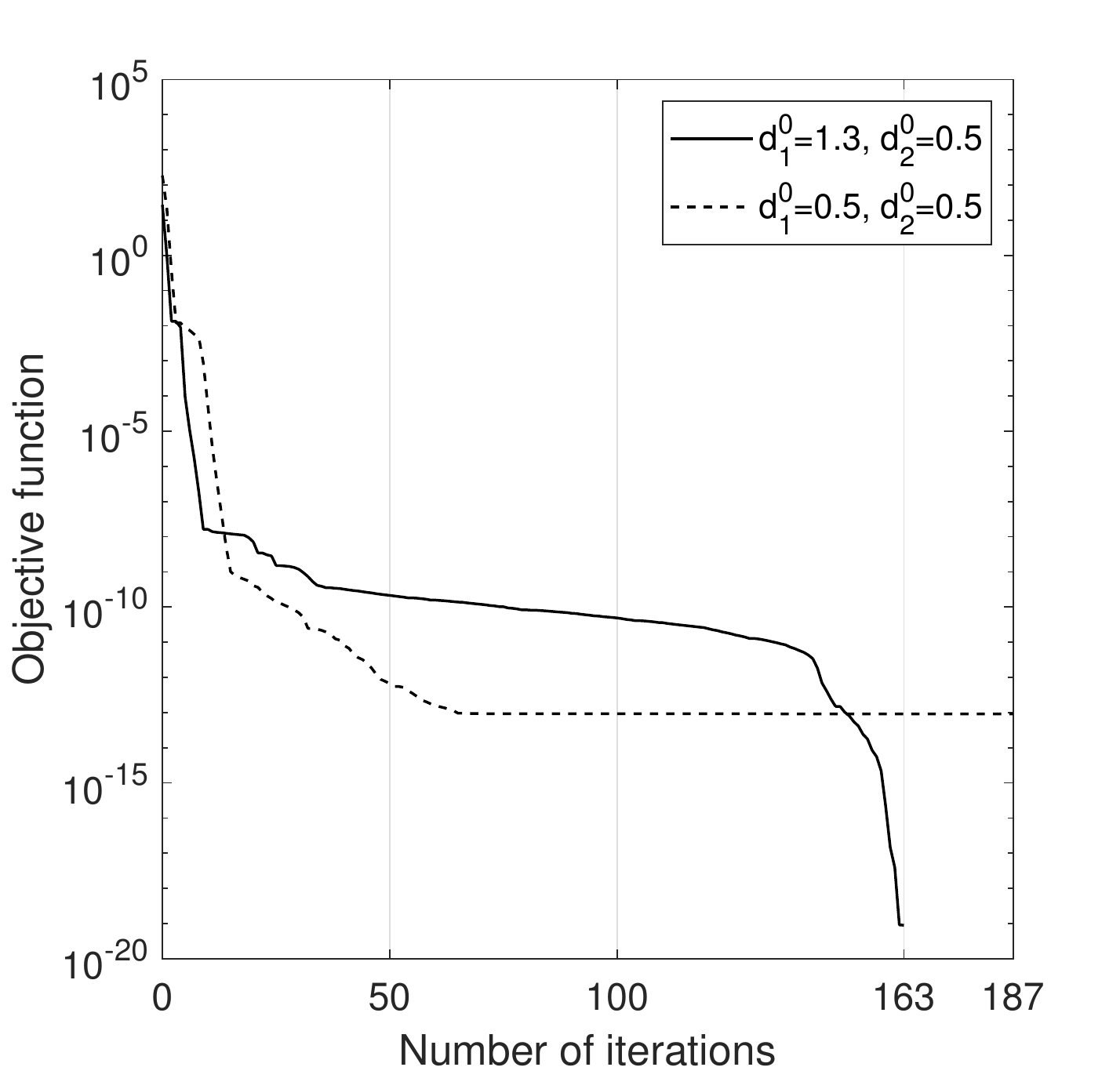}
	\caption{Convergence of the objective function~\eqref{eq:objective} for the initial guess~\eqref{eq:initial_guess}, and $d_1^0 \in \joukko{ 0.5 ,1.3}$.}
	\label{fig:5}
\end{figure}

For the initial guess~\eqref{eq:initial_guess} and $d_1^0 =1.3$, the reconstruction identifies exactly the true solution for the unknowns. In order to improve the robustness with respect to the other initial guess given 
by \eqref{eq:initial_guess} and $d_1^0 =0.5$, observe that upon integrating over space $\Omega = (0, 1)$ the second equation in \eqref{eq11} and using the homogeneous Dirichlet boundary condition $M|_{\doo \Omega \times \R_+} = 0$ of \eqref{eq:biofilm}, we obtain
\begin{equation}
\label{eq:44}
-K_2 E_M(t) + K_3 \int_{\Omega} \frac{S(x, t)M(x, t)}{K_4 + S(x, t)} \der x = E_M'(t) - \int_{\Omega} G(x, t) \der x, \quad t \in \R_+,
\end{equation}
which highlights a relationship between the unknowns $K_2$, $K_3$, and $K_4$.
In particular, letting $t \searrow 0$ in \eqref{eq:44}, we have
\begin{equation}
\label{eq:45}
-K_2 E_M(0) + K_3 \int_{\Omega} \frac{S_0(x)M_0(x)}{K_4 + S_0(x)} \der x = E_M'(0) - \int_{\Omega} G(x, 0) \der x.
\end{equation}
On substituting the expressions for $G(x,0)$, $S_{0}(x)$ and $M_{0}(x)$ from \eqref{eqG} and \eqref{eqS0} into \eqref{eq:45}, and evaluating the integrals involved using symbolic 
computations in MAPLE we obtain
\begin{eqnarray}
K_{2}=0.454822555 +K_{3} \left[ 1-6K_{4}+\frac{24K_{4}(K_{4}+1)}{\sqrt{5+4K_{4}}} \mbox{arctanh} \left( \frac{1}{\sqrt{5+4K_{4}}} \right) \right]. 
\label{K2}
\end{eqnarray}
This expression can be substituted into the problem in order to reduce the number of unknowns by 1, i.e. from 8 to 7 unknowns. Then, starting from the initial guess given by the second column of Table~\ref{table:5}, 
namely, 
\begin{eqnarray}
a^{0}=1.0041, \; b^{0}=2.0007, \; K_{1}^{0}= 1.0032, \; K_{3}^{0}=0.2268, \; K_{4}^{0}=d_{1}^{0}=1, \; d_{2}^{0}=1.0005,
\nonumber
\end{eqnarray}
we obtain in 10 iterations the true values (with 4 digits) of the 7 unknowns, plus $K_{2}$ via \eqref{K2}. 

Although no random noisy errors have been introduced in the flux $q_{0}(t)$ and the biomass $E_{M}(t)$, the analytical input data \eqref{eq:flux_measurement} and \eqref{eq:M_mass} already contain some numerical noise due to the fact 
that we are discretising the problem with a fixed mesh size and only in the limit $\Delta x = \Delta t \searrow 0$ this numerical noise disappears. Therefore, as a by-product, our numerical inversion has also been tested for stability with respect to 
this type of numerical noise in the input data. For general random noise in the input data 
\eqref{eq:flux_measurement} and \eqref{eq:M_mass}, the least-squares functional \eqref{eq:objective} may need to be penalised using  regularization. 


\section{Conclusions}
We have given an elementary injectivity proof that assumes information on the biofilm and substrate density, but otherwise has non-restrictive assumptions.
The method could work in other inverse problems with rich data, either within the domain or at a boundary point.

We have also illustrated numerically (in one spatial dimension with similar conclusions expected to hold also in higher dimensions) that recovering only a few parameters  is feasible from measurements of only the flux of the substrate, while as the number of unknown parameters increases, recovery becomes more difficult.
In particular, recovering the coefficients of biofilm growth, $K_2$ and $K_3$, seems challenging, but this can be overcome by further measuring the total biomass.  Of course, for inverting random noisy data or if the number of unknown parameter increases or if the parameters are variable with space and/or time, regularization 
may need to be employed~\cite{Chen:Jiang:2021}. In the near future, it is hoped that the mathematical model investigated in this paper using inverse problem techniques can be validated by inverting real raw data.

\subsection*{Acknowledgements}
We would like to Peter Lindqvist for discussions concerning the regularity of the solutions. The disussions with Markus Grasmair are also acknowledged. 
T.B.\ was partially funded by grant no.~4002-00123 from the Danish Council for Independent Research | Natural Sciences and the Research Council of Norway through the FRIPRO Toppforsk project `Waves and nonlinear phenomena'.

\bibliographystyle{plain}
\bibliography{math}

\appendix

\section{Classical regularity of the forward problem} \label{AppendixA}
In this appendix we briefly outline the classical regularity for the solution $(S,M)$ of the homogeneous problem associated to \eqref{eq:biofilm} obtained by taking $F=G=0$, namely, 
\begin{eqnarray} \label{eq:biofilm1}
\begin{cases}
\doo_t S = d_1 \Delta_x S - K_1 \frac{SM}{K_4 + S}, \quad (x,t) \in \Omega \times \R_{+}, \\
\doo_t M = d_2 \nabla_x \cdot \parens{\frac{M^b}{\parens{1-M}^a} \nabla_x M} - K_2 M + K_3 \frac{SM}{K_4+S}, \quad (x,t) \in \Omega \times \R_{+}, \\
S|_{\doo \Omega \times \R_+} = 1, \quad M|_{\doo \Omega \times \R_+} = 0, \\
S|_{t=0} = S_0, \quad M|_{t=0} = M_0, \end{cases} 
\end{eqnarray}
outside the set~$\doo \joukko{(x, t) \in \Omega \times \R_+; M(x, t) = 0}$.
Since the degeneracy in our equation is of porous medium type, we do not expect the solution to be smooth everywhere~\cite{Vazquez:2007}.
We have not considered the general equation with source terms, since it would require re-establishing the foundational results in~\cite{Efendiev:2013}, which we have to leave outside the scope of the present article.

We use the exponent $\alpha \in (0,1)$ as a generic Hölder exponent. The Hölder regularity in time of solution is only $\alpha/2$ if the spatial Hölder regularity is of order $\alpha$, but we have ignored this to simplify notation.
First, we establish the Hölder regularity of solution, after which we consider all the terms aside from the highest order terms to be stationary functions, and thereafter use the principle that 
the linear heat equation with Hölder terms has $C^{2+\alpha}$-regularity, and half of that in time.

Some results only hold in sets where $0 < \eps < M < 1 - \eps$. 
The upper bound is not a significant issue, since under the Dirichlet boundary conditions in equation~\eqref{eq:biofilm} the following result holds~\cite[theorem 5.1]{Efendiev:2013}: If $\norm{M_{0}}_{L^\infty(\Omega)} < 1$, then $\norm{M(x, t)}_{L^\infty(\Omega \times \R_+)} < 1$. The lower bound however remains an issue.

The lemmas below hold for as long the solution $(S, M)$ exists and remains bounded. This is not restricted to only the Dirichlet boundary conditions in~\eqref{eq:biofilm}, where the existence and boundedness is guaranteed for all time~\cite[section~5.1]{Efendiev:2013}, and these results hold more generally.
Since the model might be used with some other boundary conditions, too, we have decided to include this turn of phrase, rather than assume the precise Dirichlet data.

There exists a unique solution to the direct problem~\eqref{eq:biofilm1}~\cite[section~5.1]{Efendiev:2013}.
Since we know {\it a priori} from~\cite[chapter~5, theorem~5.3]{Efendiev:2013} that $S$ and $M$ are bounded, classical theory implies~\cite[chapter~V, section~1, theorem~1.1]{Ladyzenskaja:Solonnikov:Uralceva:1968}, \cite[chapter~2, section~1, remark~1.1.; see also chapter~3, section~1, theorems~1.1 and 1.2]{DiBenedetto:1993} that $S \in C^\alpha(\Omega \times \R_+)$ and, for any $\eps >0$, $M \in C^\alpha_{\text{loc}}\parens{ \joukko{(x, t) \in \Omega \times \R_+; 1- \eps > M(x,t) > \eps > 0} }$, where the regularity of $S$ is only local unless we make reasonable assumptions on boundary and initial values and the geometry of the domain $\Omega$.
Efendiev~\cite[chapter~5]{Efendiev:2013} also proves that $S \in H^1(\Omega)$ and $M \in H^s(\Omega)$ for $0 < s < 1/(b+1)$.

In \cite[chapter~V, section~3, theorem~3.1]{Ladyzenskaja:Solonnikov:Uralceva:1968} it is proven that $\nabla_x S \in C^\alpha_{\text{loc}}$ in cylinders and
$$
\int_{\text{cylinder}} \abs{\doo_t S}^2 \der x \der t < \infty, \quad \int_{\text{cylinder}}  \abs{ \doo_{x_i}\doo _{x_j} S}^2 \der x \der t < \infty.
$$
Similar results hold for $M$ in cylinders where it is bounded away from $0$ and $1$.
The results are global for $S$ if $\doo \Omega$ is regular enough and the boundary conditions likewise.
There are extra regularity assumptions that are made for the sake of simplicity, which can be removed by the methods in~\cite[section~IV]{Ladyzhenskaya:Ural'tseva:1968}.
Hence, we have $\doo_t S, \doo_{x_i} \doo_{x_j} S \in L^2_{\text{loc}}$ and likewise for $M$ in cylinders where $M$ is bounded away from zero and one.

\begin{lemma}
We have, 
as long as the solution $(S, M)$ of \eqref{eq:biofilm1} exists and remains bounded:
\begin{eqnarray}
\norm{S}_{L^\infty} \le 1, \quad \nabla_x S \in C^\alpha_{\text{loc}}\parens{\Omega \times \R_+}, \quad \doo_t S \in L^2_{\text{loc}}\parens{\Omega \times \R_+}, \quad 
D_x^2 S \in L^2_{\text{loc}}\parens{\Omega \times \R_+}. \nonumber
\end{eqnarray}
\end{lemma}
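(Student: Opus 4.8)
The plan is to exploit that the equation for $S$ in~\eqref{eq:biofilm1} is \emph{uniformly} parabolic: its leading coefficient is the fixed positive constant $d_1$, so all of the degeneracy of the system lives in the equation for $M$, and $S$ inherits the regularity of a solution of a linear heat equation with a bounded forcing term. Concretely, I would rewrite the substrate equation as $\doo_t S - d_1 \Delta_x S = f$ with $f := -K_1 \frac{SM}{K_4+S}$, and treat $f$ as a known inhomogeneity whose integrability I read off from the already-established bounds on $S$ and $M$.

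First I would settle the $L^\infty$ bound. The constant function $1$ is a supersolution of $\doo_t - d_1\Delta_x$ that is compatible with the boundary datum $S|_{\doo\Omega}=1$ and dominates the initial datum $S_0$, and the reaction term carries the sign that opposes growth; the comparison principle for the uniformly parabolic operator then yields $\norm{S}_{L^\infty}\le 1$. This is precisely the a~priori bound quoted from \cite[chapter~5, theorem~5.3]{Efendiev:2013}, so in practice I would cite it.

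With $0\le S\le 1$, with $M$ bounded by $\norm{M}_{L^\infty}<1$, and with $K_4+S\ge K_4>0$, the forcing obeys $\abs{f}\le K_1/K_4$, so $f\in L^p_{\mathrm{loc}}(\Omega\times\R_+)$ for every finite $p$. For the two $L^2$ statements I would invoke the classical energy estimates for linear parabolic equations \cite[chapter~V, section~3, theorem~3.1]{Ladyzenskaja:Solonnikov:Uralceva:1968}: a bounded right-hand side gives $\doo_t S, D_x^2 S\in L^2_{\mathrm{loc}}$ in interior cylinders. For the Hölder continuity of the gradient I would then apply the parabolic $L^p$ (Calderón--Zygmund) theory to obtain $S\in W^{2,1}_{p,\mathrm{loc}}$ and combine it with the parabolic Sobolev embedding $W^{2,1}_p\hookrightarrow C^{1+\alpha}$ valid for $p>n+2$, concluding $\nabla_x S\in C^\alpha_{\mathrm{loc}}(\Omega\times\R_+)$.

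The only delicate point is the behaviour near the free boundary $\joukko{M=0}$, where $M$ need not enjoy the modulus of continuity that a Schauder argument would require of $f$. I would sidestep this obstacle by preferring the $L^p$-plus-Sobolev-embedding route above, which needs only $f\in L^p$ and hence only the global boundedness of $M$, not its Hölder seminorm; the alternative Schauder argument would instead rely on the local Hölder continuity of $M$ recorded in the preceding paragraph, which holds only where $\eps<M<1-\eps$. In this way the degeneracy of the $M$-equation never enters the regularity theory for $S$, and the lemma follows by assembling the four displayed conclusions from the linear theory applied to $\doo_t S - d_1\Delta_x S=f$ with bounded $f$.
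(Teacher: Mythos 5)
Your proposal is correct, and its overall shape --- an a priori $L^\infty$ bound followed by classical parabolic theory applied to the substrate equation alone --- is the same as the paper's; the difference lies in the technical route to the gradient estimate. The paper stays inside the quasilinear framework of Ladyzhenskaya--Solonnikov--Ural'tseva: it quotes Efendiev's theorem 5.3 for $\norm{S}_{L^\infty}\le 1$, LSU chapter~V, section~1 (and DiBenedetto) for H\"older continuity of $S$ itself, and then LSU chapter~V, section~3, theorem~3.1 in a single stroke for both $\nabla_x S \in C^\alpha_{\text{loc}}$ and the $L^2_{\text{loc}}$ bounds on $\doo_t S$ and $D_x^2 S$. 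You instead decouple the substrate equation as a \emph{linear} uniformly parabolic equation $\doo_t S - d_1 \Delta_x S = f$ with $\abs{f}\le K_1/K_4$, get the two $L^2$ statements from energy estimates, and obtain $\nabla_x S \in C^\alpha_{\text{loc}}$ from parabolic Calder\'on--Zygmund theory ($W^{2,1}_{p,\text{loc}}$ for every finite $p$) combined with the embedding $W^{2,1}_p \hookrightarrow C^{1+\alpha}$ for $p>n+2$. Both routes are valid. What yours buys is an explicit explanation of why the porous-medium degeneracy of the $M$-equation never contaminates the regularity of $S$: only $\norm{M}_{L^\infty}$ enters the argument, not any modulus of continuity of $M$, so the free boundary $\doo\joukko{M=0}$ --- near which $M$ need not be H\"older --- is harmless, and the conclusion holds in all of $\Omega\times\R_+$ rather than only away from the degenerate set. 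The paper leaves this point implicit in its citation of the quasilinear theorem. Two small caveats on your write-up: your comparison-principle argument for $\norm{S}_{L^\infty}\le 1$ presupposes $S\ge 0$, $M\ge 0$ and $S_0\le 1$ (so that the reaction term has a sign), which is precisely the invariance statement you would in any case import from Efendiev, exactly as the paper does; and the result you describe as ``classical energy estimates for linear parabolic equations'' is, in the cited source, stated for quasilinear equations --- the citation is right, the label is slightly off, and neither affects correctness.
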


\begin{lemma}
As long as the solution $(S,M)$ of \eqref{eq:biofilm1} exists and remains bounded, we have that $\norm{M}_{L^\infty(\Omega \times \R_+)} \le 1$, and for all space-time cylinders $Q \Subset \Omega \times \R_+$ 
for which there exists $\eps > 0$ with
\begin{eqnarray}
0 < \eps < M(x,t) < 1-\eps, \quad (x,t) \in Q, \nonumber
\end{eqnarray}
we have that
\begin{eqnarray}
\nabla_x M \in C^\alpha\parens{Q}, \quad \doo_t M \in L^2\parens{Q}, \quad D_{x}^{2} M \in L^2\parens{Q}. \nonumber
\end{eqnarray}
\end{lemma}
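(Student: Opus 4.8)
The plan is to mirror the proof of the preceding lemma for $S$, exploiting the fact that on the cylinder $Q$ the restriction $0 < \eps < M < 1 - \eps$ removes both the degeneracy at $M = 0$ and the singularity at $M = 1$, so that the quasilinear biofilm equation in \eqref{eq:biofilm1} is genuinely uniformly parabolic there. The global bound $\norm{M}_{L^\infty(\Omega \times \R_+)} \le 1$ is exactly the invariance result of Efendiev~\cite[theorem 5.1]{Efendiev:2013} already quoted, so that part of the statement needs no further argument and the work is entirely local.

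First I would record the two-sided bound on the diffusivity. On $Q$, using $a \ge 0$, $b \ge 1$, one has $\eps^b \le M^b \le 1$ and $\eps^a \le (1-M)^a \le 1$, hence
\[
\eps^b \le \lambda(M) = \frac{M^b}{(1-M)^a} \le \eps^{-a},
\]
so $\lambda(M)$ is pinched between two positive constants depending only on $\eps, a, b$. Writing the equation for $M$ in divergence form with the measurable, uniformly elliptic coefficient $\lambda(M)$ and bounded lower-order terms (the reactions $-K_2 M$ and $K_3 SM/(K_4+S)$ are bounded because $0 \le S \le 1$ and $K_4 > 0$), I would invoke the De Giorgi--Nash--Moser theory in the form of~\cite[chapter~V, section~1, theorem~1.1]{Ladyzenskaja:Solonnikov:Uralceva:1968} (see also~\cite{DiBenedetto:1993}) to obtain $M \in C^\alpha(Q')$ on any slightly smaller $Q' \Subset Q$; this is the interior Hölder regularity announced in the discussion preceding the lemmas.

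Next I would bootstrap. Since $\lambda$ is smooth on the compact interval $[\eps, 1-\eps]$ and $s \mapsto s/(K_4+s)$ is smooth, the composite coefficient $\lambda(M)$ and the reaction term become Hölder continuous once $M \in C^\alpha$ and $S \in C^\alpha$ (the latter from the previous lemma). Treating these now as known Hölder coefficients converts the equation into a linear parabolic equation with Hölder data, to which~\cite[chapter~V, section~3, theorem~3.1]{Ladyzenskaja:Solonnikov:Uralceva:1968} applies, yielding $\nabla_x M \in C^\alpha(Q)$ together with the energy bounds $\doo_t M, D_x^2 M \in L^2(Q)$. This is the same scheme used for $S$ and it closes the argument.

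The main obstacle is conceptual rather than computational: the equation is degenerate-singular, so none of these estimates can survive near $M = 0$ or $M = 1$, and one is forced to localise to cylinders where $M$ stays strictly interior to $(0,1)$ — which is precisely the hypothesis. Once that localisation is in place the degeneracy disappears and the standard quasilinear theory takes over; the only genuine care needed is to verify the uniform ellipticity constants above and to ensure the bootstrap really upgrades the merely measurable coefficient $\lambda(M)$ to a Hölder one before the gradient and second-derivative estimates are invoked.
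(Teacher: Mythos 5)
Your proposal is correct and follows essentially the same route as the paper: the $L^\infty$ bound is quoted from Efendiev, the restriction $\eps < M < 1-\eps$ on $Q$ makes the equation uniformly parabolic, H\"older continuity comes from \cite[chapter~V, section~1, theorem~1.1]{Ladyzenskaja:Solonnikov:Uralceva:1968} and \cite{DiBenedetto:1993}, and the gradient H\"older regularity together with the $L^2$ bounds on $\doo_t M$ and $D_x^2 M$ comes from \cite[chapter~V, section~3, theorem~3.1]{Ladyzenskaja:Solonnikov:Uralceva:1968}, exactly as in the paper's treatment of $S$ transferred to cylinders where $M$ is bounded away from $0$ and $1$. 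The only cosmetic difference is that you interpose an explicit bootstrap (upgrading $\lambda(M)$ to a H\"older coefficient and viewing the equation as linear) where the paper applies the quasilinear theorem directly; this does not change the substance of the argument.
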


Next, we reconsider the pair of equations in of \eqref{eq:biofilm1} as the linear parabolic equations
\begin{eqnarray}
\begin{cases}
\doo_t S &= d_1 \Delta_x S + f(x,t),\;\; (x,t) \in \Omega \times \R_+ \\
\doo_t M &= d_2 \Delta_x M + g(x,t),\; \; (x,t) \in \Omega \times \R_+,
\end{cases} \nonumber
\end{eqnarray}
where we have omitted the initial and boundary conditions and have written
\begin{align}
f(x,t) &= - K_1 \frac{S(x,t)M(x,t)}{K_4 + S(x,t)}, \nonumber \\
g(x,t) &= d_2 \frac{bM^{b-1}\parens{1-M}^a + a\parens{1-M}^{a-1}M^b}{\parens{1-M}^{2a}} \abs{\nabla_x M}^2 - K_2 M + K_3 \frac{SM}{K_4+S}. \nonumber
\end{align}
As long as we restrict ourselves to a smooth set where $0 < \eps < M < 1-\eps$, both $f$ and $g$ are Hölder continuous -- multiplication, division, and addition maintain local Hölder-continuity, 
 and since $M$ and $1-M$ are restricted away from zero, the possibly negative powers involving $a$ or $b$ also maintain Hölder-continuity, though the exponent~$\alpha$ may change.
In particular, the power functions $x \mapsto x^p$ (for any $p \in \R$) are smooth with all derivatives bounded when $x$ is restricted to a compact subset of positive real numbers.

Now, \cite[chapter~V, section~6, theorem~6.2]{Ladyzenskaja:Solonnikov:Uralceva:1968} gives that first time derivative and second spatial derivatives of both $S$ and $M$, in regions where $M$ is bounded away from zero, are Hölder-continuous.
Also, since $S$ solves a heat equation when $M=0$, it is regular in open sets where $M$ vanishes.

\begin{lemma}[Regularity lemma]
\label{lemma:regularity}
At any~$(x,t) \notin \doo \joukko{(x, t) \in \Omega \times \R_+ ; M(x, t) = 0}$ where $0 < M(x,t) < 1$, all derivatives $\doo_t M(x, t)$, $\doo_t S(x, t)$, $\Delta_x M(x, t)$ and $\Delta_x S(x, t)$ exist pointwise as classical derivatives.
\end{lemma}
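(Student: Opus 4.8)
The plan is to reduce, on a small space-time neighbourhood of the given point, each of the two quasilinear equations in \eqref{eq:biofilm1} to a \emph{linear} heat equation with a Hölder-continuous source term, and then to invoke the classical Schauder theory for the linear heat equation in order to upgrade the weak ($L^2$) second-order information recorded in the two preceding lemmas to genuine classical pointwise derivatives. First I would fix a point $(x,t)$ satisfying the hypotheses. Since the earlier results show that $M$ is locally Hölder continuous, hence continuous, the strict inequalities $0 < M(x,t) < 1$ persist on a full cylinder: there are $\eps > 0$ and a cylinder $Q \Subset \Omega \times \R_+$ containing $(x,t)$ in its interior with $0 < \eps < M < 1 - \eps$ throughout $Q$. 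On such a $Q$ the two preceding lemmas apply and furnish $\nabla_x S, \nabla_x M \in C^\alpha(Q)$ together with $\doo_t S, \doo_t M, D_x^2 S, D_x^2 M \in L^2(Q)$ and $\norm{S}_{L^\infty} \le 1$.

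Next I would linearise, writing the two equations of \eqref{eq:biofilm1} as
\begin{align}
\doo_t S &= d_1 \Delta_x S + f, \nonumber \\
\doo_t M &= d_2 \Delta_x M + g, \nonumber
\end{align}
with $f$ and $g$ exactly the functions displayed immediately before the lemma, and then verify that $f, g \in C^\alpha(Q)$. This is where the hypotheses are used: on $Q$ the denominators obey $K_4 + S \ge K_4 > 0$ and $(1-M)^{2a} \ge \eps^{2a} > 0$, while $M$ and $1-M$ range over a compact subinterval of $(0,1)$, so the power maps $u \mapsto u^p$ are smooth with bounded derivatives there. Because Hölder continuity is preserved under addition, multiplication, division by functions bounded away from zero, and composition with such smooth maps, and because $\abs{\nabla_x M}^2 \in C^\alpha(Q)$ follows from $\nabla_x M \in C^\alpha(Q)$, both $f$ and $g$ are Hölder continuous on $Q$ (possibly with a reduced exponent).

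With the sources now known to be Hölder continuous, I would apply the interior Schauder estimate for the linear heat equation, \cite[chapter~V, section~6, theorem~6.2]{Ladyzenskaja:Solonnikov:Uralceva:1968}, to each of the two linear equations above. This gives $\doo_t S, D_x^2 S, \doo_t M, D_x^2 M \in C^\alpha$ on an interior subcylinder, so in particular $\doo_t S(x,t)$, $\Delta_x S(x,t)$, $\doo_t M(x,t)$ and $\Delta_x M(x,t)$ exist as classical pointwise derivatives, which is the assertion.

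I expect the only genuine obstacle to be the verification that $g$ is Hölder continuous, since $g$ contains the factor $\abs{\nabla_x M}^2$ together with the negative powers of $M$ and $1-M$; it is exactly here that the porous-medium degeneracy at $\joukko{M = 0}$ and the singularity at $\joukko{M = 1}$ could destroy the argument, and this is precisely why one must work on a cylinder where $M$ is separated from both $0$ and $1$. The standing hypothesis $(x,t) \notin \doo\joukko{(x,t) \in \Omega \times \R_+; M(x,t) = 0}$ guarantees that the point never lies on the transition layer where this separation fails. For completeness I would record the complementary situation: at an interior point of $\joukko{M = 0}$ one has $M \equiv 0$ nearby, so $\doo_t M = \Delta_x M = 0$ trivially, and $S$ solves the pure heat equation $\doo_t S = d_1 \Delta_x S$ there and is therefore smooth, so the four derivatives exist off the excluded boundary in that regime as well.
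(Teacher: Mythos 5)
Your proposal is correct and takes essentially the same route as the paper: localize to a cylinder where $0 < \eps < M < 1-\eps$, rewrite both equations as linear heat equations with the same sources $f$ and $g$, check that these are H\"older continuous there (algebraic operations plus smooth power maps on compact subsets of $(0,\infty)$), and apply the Schauder estimate of Ladyzenskaja--Solonnikov--Ural'ceva (Ch.~V, Sec.~6, Thm.~6.2) to obtain classical derivatives, with the same side remark that $S$ solves a pure heat equation in the interior of $\joukko{M=0}$. The only shared blemish (present in the paper as well) is that the linearized biofilm equation is displayed with leading coefficient $d_2$ rather than $d_2\lambda(M)$; this is harmless, since on the cylinder $d_2\lambda(M)$ is a H\"older-continuous coefficient bounded away from zero and Schauder theory applies verbatim.
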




\section{Numerical solution of the forward problem} \label{AppendixB}
The linearly implicit three-level finite-difference scheme \cite{Lees:1966} is applied to obtain the numerical solution to the non-linear parabolic direct (forward) problem \eqref{eq11}. For numerical discretization, a rectangular grid is constructed by subdividing the solution domain into $I\times N$ subintervals of the step lengths $\Delta x$ and $\Delta t$ in space $x$ and time $t$ directions, where $I$ and $N$ are two positive integers greater than 2.
Taking $\Delta x=\frac{1}{I-1}$ and $\Delta t=\frac{T}{N-1}$, then
$$x_i=(i-1)\Delta x,\quad i=\overline{1,I},\quad t_n=(n-1)\Delta t,\quad n=\overline{1,N}.$$
Denote $S_i^n:=S(x_i,t_n)$, $M_i^n:=M(x_i,t_n)$, $F_i^n:=F(x_i,t_n)$ and $G_i^n:=G(x_i,t_n)$ for $i=\overline{1,I}$ and $n=\overline{1,N}$. Then, for $i=\overline{2,I-1}$, we have
\begin{align*}
&S_i^1=S_0(x_i),\quad M_i^1=M_0(x_i),\\
&S_i^2=S_i^1+\frac{d_1\Delta t}{(\Delta x)^2}(S_{i+1}^1-2S_i^1+S_{i-1}^1)-K_1\Delta t\frac{S_i^1M_i^1}{K_4+S_i^1}+\Delta tF_i^1,\\
&M_i^2=M_i^1-K_2\Delta tM_i^1+K_3\Delta t\frac{S_i^1M_i^1}{K_4+S_i^1}+\Delta tG_i^1\\
&\ +\frac{d_2\Delta t}{(\Delta x)^2}\left[\lambda\left(\frac{M_{i+1}^1+M_i^1}{2}\right)(M_{i+1}^1-M_i^1)-\lambda\left(\frac{M_{i}^1+M_{i-1}^1}{2}\right)(M_{i}^1-M_{i-1}^1)\right],
\end{align*}
and for $n=\overline{2,N-1}$,
\begin{align*}
&\frac{S_i^{n+1}-S_i^{n-1}}{2\Delta t}=\frac{d_1}{(\Delta x)^2}(\hat{S}_{i+1}^n-2\hat{S}_i^n+\hat{S}_{i-1}^n)-K_1\frac{S_i^nM_i^n}{K_4+S_i^n}+F_i^n,\\
&\frac{M_i^{n+1}-M_i^{n-1}}{2\Delta t}=-K_2M_i^n+K_3\frac{S_i^nM_i^n}{K_4+S_i^n}+G_i^n\\
&\quad +\frac{d_2}{(\Delta x)^2}\left[\lambda\left(\frac{M_{i+1}^n+M_i^n}{2}\right)(\hat{M}_{i+1}^n-\hat{M}_{i}^n)-\lambda\left(\frac{M_{i}^n+M_{i-1}^n}{2}\right)(\hat{M}_{i}^n-\hat{M}_{i-1}^n)\right],
\end{align*}
where 
$$\hat{S}_i^n:=\frac{S_i^{n+1}+S_i^n+S_i^{n-1}}{3},\quad \hat{M}_i^n:=\frac{M_i^{n+1}+M_i^n+M_i^{n-1}}{3}.$$
Denoting 
$$\alpha:=\frac{2\Delta td_1}{3(\Delta x)^2},\quad\lambda_i^n:=\frac{2\Delta td_2}{3(\Delta x)^2}\lambda\left(\frac{M_{i}^n+M_{i-1}^n}{2}\right),$$
we obtain
\begin{align}
&-\alpha S_{i-1}^{n+1}+(1+2\alpha)S_i^{n+1}-\alpha S_{i+1}^{n+1}=f_{i+1}^n,\label{eq23}\\
&-\lambda_i^nM_{i-1}^{n+1}+(1+\lambda_i^n+\lambda_{i+1}^n)M_i^{n+1}-\lambda_{i+1}^nM_{i+1}^{n+1}=g_{i+1}^n,\label{eq24}
\end{align}
where
\begin{align*}
f_{i+1}^n=&\alpha(S_{i+1}^n+S_{i+1}^{n-1}-2S_i^n-2S_i^{n-1}+S_{i-1}^n+S_{i-1}^{n-1})+S_i^{n-1}\\
&-2\Delta tK_1\frac{S_i^nM_i^n}{K_4+S_i^n}+2\Delta tF_i^n,\\
g_{i+1}^n=&\lambda_{i+1}^n(M_{i+1}^n+M_{i+1}^{n-1}-M_i^n-M_i^{n-1})-\lambda_{i}^n(M_{i}^n+M_{i}^{n-1}-M_{i-1}^n-M_{i-1}^{n-1})\\
&+M_i^{n-1}-2\Delta tK_2M_i^n+2\Delta tK_3\frac{S_i^nM_i^n}{K_4+M_i^n}+2\Delta tG_i^n.
\end{align*}
We can rewrite the equations \eqref{eq23} and \eqref{eq24} in the following matrix forms:
\begin{align}
\label{eq:matrix_form}
\mathbf{A}\mathbf{S}^{n+1}=\mathbf{f}^n, \quad \mathbf{B}^n\mathbf{M}^{n+1}=\mathbf{g}^n, \quad n=\overline{2,N-1}.
\end{align}
Here, $\mathbf{A}$ and $\mathbf{B}^n$ are $(I-2)\times(I-2)$ symmetric matrices given by
\begin{equation*}
\mathbf{A}=\begin{bmatrix}
1+2\alpha & -\alpha & 0 &\cdots &0 &0 &0\\
-\alpha & 1+2\alpha & -\alpha & \cdots &0 &0 &0\\
\vdots &\vdots &\vdots &\ddots &\vdots &\vdots &\vdots\\
0& 0& 0& \cdots &-\alpha & 1+2\alpha & -\alpha\\
0& 0& 0& \cdots &0 & -\alpha & 1+2\alpha
\end{bmatrix},
\end{equation*}

\begin{equation*}
\mathbf{B}^n=\begin{bmatrix}
1+\lambda_2^n+\lambda_3^n & -\lambda_3^n & 0 &\cdots &0 &0 &0\\
-\lambda_3^n & 1+\lambda_3^n+\lambda_4^n & -\lambda_4^n & \cdots &0 &0 &0\\
\vdots &\vdots &\vdots &\ddots &\vdots &\vdots &\vdots\\
0& 0& 0& \cdots &-\lambda_{I-2}^n & 1+\lambda_{I-2}^n+\lambda_{I-1}^n & -\lambda_{I-1}^n\\
0& 0& 0& \cdots &0 & -\lambda_{I-1}^n & 1+\lambda_{I-1}^n+\lambda_{I}^n
\end{bmatrix}.
\end{equation*}
In the first equation of~\eqref{eq:matrix_form}, 
$$\mathbf{S}^{n} = \left[ S_{2}^{n},\cdots,S_{I-1}^{n} \right]^{\mathrm{T}} \; 
\mbox{and} \; \mathbf{f}^n=\left[f_3^n+\alpha\mu_1^{n+1}, f_4^n,\cdots, f_{I-1}^n,f_{I}^n + \alpha \mu_{2}^{n+1}\right]^{\mathrm{T}},$$
and in the second one, 
$$\mathbf{M}^{n} = \left[ M_{2}^{n},\cdots,M_{I-1}^{n} \right]^{\mathrm{T}} \; 
\mbox{and} \; \mathbf{g}^n=\left[g_{3}^{n} + \lambda_{2}^{n} \mu_{3}^{n+1}, g_{4}^{n},\cdots, g_{I-1}^{n},g_{I}^{n} + \lambda_{I}^{n} \mu_{4}^{n+1} \right]^{\mathrm{T}}.$$

\end{document}